\def\id{\mathrm{id}}
\def\C{\mathrm{Cone}}
\def\q{\mathbb{Z}_q}
\newtheorem{thm}{Theorem}[section]
\newtheorem{cor}[thm]{Corollary}
\newtheorem{lem}[thm]{Lemma}
\newtheorem{prop}[thm]{Proposition}
\theoremstyle{definition}
\newtheorem{defn}[thm]{Definition}
\theoremstyle{remark}
\numberwithin{equation}{section}
\def\t{\mathbb{T}}
\def\q{\mathbb{Q}}
\def\zq{\mathbb{Z}_q}
\begin{document}
\title [Finite and Torsion $KK$-theories]
{Finite and Torsion $KK$-theories}%
\author{Hvedri Inassaridze \;\;\  and \;\;\ Tamaz Kandelaki }%
\address{H. Inassaridze , T.Kandelaki:\\
Tbilisi Centre for Mathematical Sciences, A. Razmadze Mathematical Institute, M.
Alexidze Str. 1, 380093 Tbilisi, Georgia}%
\email{}%

\thanks{The authors were partially supported by GNSF-grant, INTAS-Caucasus grant and Volkswagen Foundation grant}%

\begin{abstract}

 We develop a finite $KK^{G}$-theory of $C*$-algebras following Arlettaz-H.Inassaridze's approach to finite algebraic
$K$-theory \cite{arin}
. The Browder-Karoubi-Lambre's theorem on
the orders of the elements for finite algebraic $K$-theory [ , ]
is extended to finite $KK^{G}$-theory. A new bivariant theory,
called torsion $KK$-theory is defined as the direct limit of
finite $KK$-theories. Such bivariant $K$-theory has almost all
$KK^G$-theory properties  and one has  the following exact
sequence
\begin{multline*}
\cdots \rightarrow KK^{G}_n(A,B)\rightarrow
KK^{G}_n(A,B;\mathbb{Q})\rightarrow
KK^{G}_{n}(A,B;\mathbb{T})\rightarrow \cdots
\end{multline*}
relating $KK$-theory, rational bivariant $K$-theory and torsion $KK$-theory.
 For a given homology theory on the category of separable
$GC^*$ -algebras finite, rational and torsion homology theories are introduced and investigated. In particular, we formulate finite, torsion and rational versions of Baum-Connes Conjecture.
The later is equivalent to the investigation of rational and
$q$-finite analogues for Baum-Connes Conjecture for all prime
$q$.
\end{abstract}

\maketitle
\section*{Introduction}

In this paper we provide a new bivariant theory, which
will be called torsion equivariant $KK^G$-theory. That is closely
connected with the usual and rational versions of $KK^G$-theories.
By definition torsion $KK^G$-theory is a direct limit of
$KK^G$-theory with coefficients in $Z_q$ ($q$-finite $KK^G$-theory
in our terminology), where $q$ runs over all natural numbers $\geq 2$.
This new bivariant homology theory has all the properties of
$KK^G$-theory except of the existence of the identity morphism.
 We arrive to the following principle: some of
problems that arise in usual $KK^G$-theory may be reduced to
suitable problems in rational, finite and torsion $KK^G$-theories.
Namely, it will be shown that Baum-Connes conjecture has analogues
in finite, torsion and rational $KK$-theories and Baum-Connes
assembly map is an isomorphism if and only if its rational and
finite assembly maps are isomorphisms for all prime $q$ (Theorem
\ref{bcceq}).

As a technical tool, we mainly work with homology theories on the
category of $C^*$-algebras with action of a fix locally compact
group $G$. In sections 1 and 2 for a given homology theory $H$
torsion and $q$-finite homology theories $H^{(q)}$ are constructed
and their properties are investigated. Much of these properties
are known for experts in some concrete form, but we could not find
suitable references for our purposes. They are redefined and
reinvestigated here. Furthermore, in section 2 we define and
investigate a new homology theory, so called torsion homology
theory. Especially, we make accent on the following twosided long
exact sequence of abelian groups
$$
\dots \rightarrow H_{n+1}^{\mathbb{T}}(A)\rightarrow
H_n(A)\xrightarrow{r} H_n(A)\otimes \mathbb{Q}\rightarrow
H_n^{\mathbb{T}}(A)\rightarrow H_{n-1}(A)\xrightarrow{r}\cdots
$$ for any $GC^*$-algebra $A$ which is used concretely for bivariant $KK$-theories in the sequel
section. In particular, based on results of these sections we
list properties of torsion and finite bivariant
$KK^G$-theories. Besides, there exists a long exact sequence,
which is similar to the above long exact sequence:
\begin{multline}\label{excmain}
\cdots \rightarrow KK^{G
}_{n+1}(A,B;\mathbb{Q})\rightarrow KK^G_{n+1}(A,B;\mathbb{Q}/\mathbb{Z})\rightarrow \\
\rightarrow KK^G_n(A,B)\xrightarrow{Rat_n}
KK^G_n(A,B;\mathbb{Q})\rightarrow
KK^G_{n}(A,B;\mathbb{Q}/\mathbb{Z}))\rightarrow \cdots
\end{multline}
The similar result for $K$-theory of bornological algebras one can
find in \cite{cumero}.

The rational bivariant $KK$-theory and the torsion bivariant $KK$-theory
have all the properties of usual
bivariant $KK$-theory. The only difference is that the torsion case
hasn't unital morphisms. Note that rational bivariant $KK$-theory
 used in this paper differs from the similar one defined in \cite{bla}.

In the next section 3 we study torsion and $q$-finite
$KK$-theories, where the finite $KK$-theory is redefined following
Arlettaz-H.Inassaridze's approach to finite algebraic $K$-theory
\cite{arin}. Sections 4 and 5 are devoted to the proof of  the
following Browder-Karoubi-Lambre' theorem for finite $KK$-theory
(see Theorem 5.5):

  Let $A$ and $B$ be, respectively, separable and
$\sigma $-unital $C^*$-algebras, real or complex; and $G$ be a
metrizable compact group. Then, for all integer $n$,
\begin{enumerate}
    \item $\;\;\;\;$ $q\cdot KK_n^G (A,B;Z/q) = 0,$ $\;\;\;\;\;\;$ if $q-2$ is not divided by $4$;
    \item $\;\;\;\;$ $2q \cdot KK_n ^G(A,B;Z/q) = 0,$ $\;\;\;\;\;$ if $4$ divides $q-2$.
    \end{enumerate}

\bigskip

It is clear that this result holds for non-unital rings too. For finite algebraic $K$-theory this
theorem for $n = 1$ was proved algebraically by Karoubi and Lambre \cite{}, and for
$n> 1$ by Browder \cite{brw}.

 The key idea to carry out this
problem is its reduction to the algebraic $K$-theory case.
This is realized by two steps. First we calculate finite
topological $K$-theory of $C^*$-algebras and additive
$C^*$-categories by finite algebraic $K$-theory of rings. Then
generalizing the main result of \cite{kan1}, finite bivariant
$KK^G$-theory is calculated by finite topological $K$-theory of the
additive $C^*$-category of Fredholm modules.
When $G$ is a locally compact group, it is more complicated to get the similar result for finite
$G$-equivariant bivariant $KK$-theory and we
intend to investigate this problem in a forthcoming paper.

\section{On Finite homology theory}

\bigskip

In this section we analyze some properties of homology
theory with coefficients in $\mathbb{Z}_q$  which is said to be
$q$-\textit{finite homology}. There exist some different ways to
construct for a given homology theory on $C^*$-algebras a corresponding
$q$-finite homology theory; we choose one of them, suitable for
 our purposes.

\bigskip

Let $S^1$ be the unit cycle in the plane of complex numbers with
module one. The map
$$
\tilde{q}:S^1\rightarrow S^1,\;\;\;\;\;\;\;\;x\mapsto x^q,
$$
$q\geq 2$, $q\in \mathbb{N}$, is called \textit{standard q-th
power map}. Since $1\in S^1$ is invariant relative to the map
$\tilde{q}$, it can be considered as a map of pointed spaces
$$
\tilde{q}:S^1_*\rightarrow S^1_*,\;\;\;\;\;\;\;\;x\mapsto x^q,
$$
where $*=1$. These are basic  q-th power maps in algebra and
topology.

Let $C_0(S^1)$ be a $C^*$-algebra of continuous complex (or real)
functions on the unit cycle $S^1$ in the plane of complex numbers
with module one vanishing at $1$. Then the map
$$
\tilde{q}:S^1\rightarrow S^1,\;\;\;\;\;\;\;\;x\mapsto x^q,
$$
$q\geq 2$, $q\in \mathbb{N}$, induces a $*$-homomorphism
$$
\hat{q}:C_0(S^1)\rightarrow C_0(S^1),\;\;\;\;\;\;\;f(s)\mapsto
f(s^q).
$$
Denote $C^*$-algebra $C_q$ as cone of the homomorphism $\hat{q}$:
$$
C_q =\{(x,f)\in C_0(S^1)\oplus C_0(S^1)\otimes
C[0;1)\;|\;\;\hat{q} (x)=f(0)\},
$$

The following lemma is one of the main property of the degree map.
The idea of the proof is taken from \cite{weid}.

\begin{lem}
Let $p_q:C_{pq}\rightarrow C_q$ be a natural map induced by a
commutative
$$
\xymatrix{C_0(S^1)\ar[r]^p\ar[d]_{pq} & C_0(S^1)\ar[d]^q\\
C_0(S^1)\ar[r]^=&C_0(S^1)}
$$
diagram. Then there is a natural homomorphism $\nu
_{p,q}:C_{p_q}\rightarrow C_p$, which is a homotopy equivalence.
\end{lem}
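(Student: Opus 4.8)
The plan is to recognize $p_q$ as the map induced on vertical mapping cones by the horizontal arrows of the given square, and then to exploit the symmetry of the iterated mapping cone of a commuting square. First I would make $p_q$ explicit. Since the square commutes precisely because $\hat q\circ\hat p=\widehat{pq}$, the induced map is $p_q(x,f)=(\hat p(x),f)$ on $C_{pq}=\{(x,f):\widehat{pq}(x)=f(0)\}$, landing in $C_q$. Writing out $C_{p_q}=\mathrm{Cone}(p_q)$ as tuples $(x,f,a,\phi)$, where $a$ is a path in $C_0(S^1)$ starting at $\hat p(x)$ and $\phi$ is a compatible two-parameter filling, the natural candidate for $\nu_{p,q}$ is the forgetful $*$-homomorphism $(x,f,a,\phi)\mapsto(x,a)\in C_p=\mathrm{Cone}(\hat p)$; this is manifestly a morphism of $C^*$-algebras, so the whole content is that it is a homotopy equivalence.

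The key step is the observation that $C_{p_q}$ is the \emph{total cone} of the square, i.e. the cone of the map between the cones of the two vertical arrows, and that this total cone may equally be computed by first taking the cones of the two horizontal arrows and then the induced map. Concretely, I would exhibit a coordinatewise $*$-isomorphism $C_{p_q}\cong\mathrm{Cone}(\rho)$, where $\rho\colon C_p\to\mathrm{Cone}(\mathrm{id}_{C_0(S^1)})$ is the map induced on the horizontal (row) cones by the vertical maps $\widehat{pq},\hat q$; the isomorphism simply transposes the two cone parameters, after checking that the four boundary conditions defining the two double cones coincide.

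To finish, I would use that $\mathrm{Cone}(\mathrm{id}_{C_0(S^1)})$ is contractible. Any $*$-homomorphism into a contractible algebra is homotopic to the zero homomorphism, so $\rho\simeq 0$; by homotopy invariance of the mapping cone this gives $\mathrm{Cone}(\rho)\simeq\mathrm{Cone}(0)=C_p\oplus S\,\mathrm{Cone}(\mathrm{id}_{C_0(S^1)})$, where $SZ=Z\otimes C_0(0,1)$ denotes the suspension. Since the suspension of a contractible algebra is contractible, the projection onto the first summand is a homotopy equivalence. Tracing the maps through the isomorphism of the previous step shows that the resulting equivalence $C_{p_q}\to C_p$ is homotopic to the forgetful morphism, whence $\nu_{p,q}$ itself is a homotopy equivalence.

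I expect the main obstacle to be the second step carried out honestly in the $C^*$-setting. The bookkeeping of the boundary conditions must be exact for the transposition to be a genuine $*$-isomorphism; more importantly, one cannot prove the equivalence by naively contracting the redundant path-coordinates $f,\phi$, since an affine contraction of those coordinates is not multiplicative and hence not a homotopy through $*$-homomorphisms. Routing the argument through homotopy invariance of the cone and the splitting $\mathrm{Cone}(0)=C_p\oplus S(\,\cdot\,)$ is exactly what circumvents this multiplicativity obstruction, and is where the idea borrowed from \cite{weid} does the essential work.
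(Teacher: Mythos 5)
Your argument is correct, but it takes a genuinely different route from the paper's. The paper proves the lemma by one explicit construction: it chooses a deformation retraction $H_t$ of the parameter square $[0,1]^2$ onto the three-sided piece $L=[0,1]\times\{1\}\cup\{0,1\}\times[0,1]$, defines the homotopy inverse $\chi\colon C_p\to C_{p_q}$ by filling in the missing square coordinate via $H_1$, and gets $\nu_{p,q}\circ\chi=\mathrm{id}_{C_p}$ on the nose and $\chi\circ\nu_{p,q}\simeq\mathrm{id}$ by pulling functions back along $H_t$; since these homotopies are compositions with continuous self-maps of the parameter space, they are automatically homotopies through $*$-homomorphisms. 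You instead modularize the same geometry: your identifications $p_q(x,f)=(\hat p(x),f)$ and $\nu_{p,q}(x,f,a,\phi)=(x,a)$ agree with the paper's maps, the transposition isomorphism $C_{p_q}\cong\mathrm{Cone}(\rho)$ with $\rho\colon C_p\to\mathrm{Cone}(\mathrm{id}_{C_0(S^1)})$ is genuine (the four boundary conditions do match), $\mathrm{Cone}(\mathrm{id}_{C_0(S^1)})\cong C_0(S^1)\otimes C_0([0,1))$ is contractible, and the splitting $\mathrm{Cone}(0)=C_p\oplus S\,\mathrm{Cone}(\mathrm{id}_{C_0(S^1)})$ with contractible second summand finishes the argument; in fact, with the standard equivalence $\mathrm{Cone}(\rho)\to\mathrm{Cone}(0)$ obtained by concatenating the contraction path onto the cone coordinate, the first coordinate is untouched, so your composite equals $\nu_{p,q}$ exactly, not merely up to homotopy. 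What your route buys is generality: it isolates the reusable principle that the iterated cone of a commuting square collapses onto the cone of one pair of parallel arrows whenever the cone of the other arrow in that direction is contractible. What it costs is that all the real work is displaced into the invoked lemma that homotopic $*$-homomorphisms have homotopy equivalent mapping cones; this is true for $C^*$-algebras, but it is not formal, and its honest proof (comparison maps built by concatenating the homotopy path with the cone coordinate, plus cancellation homotopies given by reparametrization of the interval) is exactly the same reparametrization mechanism that the paper runs once, in the square variable. You must prove or properly cite that lemma for your argument to be complete; with it supplied, your proof is sound, and your closing observation is exactly the right one—both arguments succeed because they deform the parameter space rather than the function values, which is what defeats the non-multiplicativity of affine contractions of the path coordinates.
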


\begin{proof}
A homomorphism $\nu _{p,q}$ is induced by the commutative diagram
$$
\xymatrix{C_{pq}\ar[d]\ar[r]^{p_q} & C_{q}\ar[d]\\
C_0(S^1)\ar[r]^p&C_0(S^1).}
$$
Choose a homotopy $H:[0,1]^2\times [0,1]\rightarrow [0,1]^2$
relative to $L=[0,1]\times \{1\}\cup \{0,1\}\times [0,1]$ such
that $H_0=id$ and $H_1$ is the retraction of $[0,1]^2$ on $L$.
Then, a homotopy inverse to $\nu _{p,q}$  is given by
$$
\chi :C_p\rightarrow C_{p_q},\;\;\;\;\chi (a,b)=(a, b,
\tilde{b}\cdot H_1)
$$
For $b\in C_0(S^1)[0,1)$ define $\tilde{b}:L\rightarrow C_0(S^1)$
by $\tilde{b}(s,1)=(1,t)=0$ and $\tilde{b}(0,t)=b(t^q),\;\;t,s\in
[0,1]$. We have $\nu _{p,q} \cdot \chi =C_p$.

There is a homotopy between $id_{C_{p_q}}$ and $\chi \cdot \nu
_{p,q}$ which is given by a map
$$G_t(a,b,c)=(a,b,c_t)$$
where $c_t(r,s)=c(H_t(r,s))$.
\end{proof}

\begin{lem}
\label{contenz} \begin{enumerate}
    \item Let $A$ be a $C^*$-algebra.  Then
the commutative diagram
$$
\xymatrix{A\otimes C_q\ar[r]\ar[d] & A\otimes C_0(S^1)^{[0,1)\ar[d]}\\
A\otimes C_0(S^1)\ar[r]^{id_A\otimes \hat{q}}& A\otimes C_0(S^1)}
$$
is a pullback diagram. In particular, $A\otimes C_q\simeq
C_{id_A\otimes \hat{q}} $.
    \item Let the diagram
$$
\xymatrix{A\ar[r]\ar[d] & B\ar[d]\\
C\ar[r]^{}& D}
$$
be a pullback diagram and $(X,x)$ pointed compact space. Then the
induced diagram
$$
\xymatrix{A^{(X,x)}\ar[r]\ar[d] & B^{(X,x)}\ar[d]\\
C^{(X,x)}\ar[r]^{}& D^{(X,x)}}
$$
is a pullback diagram.
\end{enumerate}
\end{lem}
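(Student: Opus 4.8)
The two assertions both say that a fixed tensor functor preserves a pullback square, and my plan is to reduce each to the exactness of that functor on a short exact sequence extracted from a surjective edge of the square, finishing with the five lemma.

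For part (1) I would first record that the mapping cone $C_{\id_A\otimes\hat q}$ is, by definition, the pullback of $\id_A\otimes\hat q$ along the evaluation $\mathrm{ev}_0\colon A\otimes C_0(S^1)\otimes C[0,1)\to A\otimes C_0(S^1)$; hence the ``in particular'' clause is automatic once the square is shown to be cartesian. The projection $\pi\colon C_q\to C_0(S^1)$, $(x,f)\mapsto x$, is surjective with kernel the suspension $C_0(S^1)\otimes C_0((0,1))$ (those $f$ with $f(0)=0$), giving
\[ 0\longrightarrow C_0(S^1)\otimes C_0((0,1))\longrightarrow C_q\xrightarrow{\ \pi\ } C_0(S^1)\longrightarrow 0. \]
The crucial observation is that this extension is \emph{semisplit}: choosing $h\in C[0,1)$ with $0\le h\le 1$ and $h(0)=1$, the map $s\colon x\mapsto(x,\hat q(x)\otimes h)$ is a completely positive contractive section of $\pi$ landing in $C_q$. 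Since a completely positive splitting tensors to a completely positive splitting $\id_A\otimes s$ of $\id_A\otimes\pi$, one checks that $z\mapsto z-(\id_A\otimes s)(\id_A\otimes\pi)(z)$ carries $A\otimes_{\min}C_q$ into $A\otimes C_0(S^1)\otimes C_0((0,1))$ and restricts to the identity on $\ker(\id_A\otimes\pi)$; this shows the tensored sequence stays exact for an \emph{arbitrary} $C^*$-algebra $A$. Comparing with the sequence coming from the surjectivity of $\mathrm{ev}_0$ and applying the five lemma identifies $A\otimes C_q$ with the pullback, i.e. $A\otimes C_q\simeq C_{\id_A\otimes\hat q}$.

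For part (2) the functor $(-)^{(X,x)}$ is nothing but $-\otimes C_0(X\setminus\{x\})$, naturally in its argument, and $C_0(X\setminus\{x\})$ is commutative, hence nuclear, hence exact. An exact tensor functor preserves injections, surjections and the kernels and images of $*$-homomorphisms; writing a pullback $B\times_D C$ as the pullback $\beta^{-1}(\gamma(C))\times_{\gamma(C)}C$ along the surjection $C\twoheadrightarrow\gamma(C)$ onto the (closed) image, I would extract the short exact sequence $0\to\ker\gamma\to B\times_D C\to\beta^{-1}(\gamma(C))\to 0$, apply the exact functor termwise, and again invoke the five lemma to identify the result with the pullback of the induced square. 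When the given square already carries a surjective edge, as in the intended applications, this is immediate and the image-reduction is unnecessary.

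The only genuinely delicate step is the exactness used in part (1): for a non-exact $A$ the functor $A\otimes_{\min}(-)$ need not preserve short exact sequences, so I must not simply cite exactness but instead exhibit the completely positive contractive section above and use that semisplit extensions are preserved by the minimal tensor product with any $C^*$-algebra. If one works throughout with the maximal tensor product both parts become immediate, since $\otimes_{\max}$ is always exact. Part (2) is comparatively routine, the only points requiring care being the identification $A^{(X,x)}\cong A\otimes C_0(X\setminus\{x\})$ and the reduction of a general pullback to one along a surjection onto the image.
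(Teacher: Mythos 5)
Your proof of part (1) is correct but takes a genuinely different route from the paper's. The paper argues pointwise: an element of the pullback $P$ is a compatible pair $(f(s),g(s,t))$ of $A$-valued functions, and such a pair is exactly a continuous $A$-valued function on the topological mapping cone $\sigma_q$ of the degree-$q$ map $S^1\to S^1$, whence $P\cong A^{\sigma_q}\cong A\otimes C_q$; the only background fact is the identification $A\otimes C_0(Y)\cong C_0(Y,A)$. You instead produce the cpc section $s(x)=(x,\hat q(x)\otimes h)$ of $\pi\colon C_q\to C_0(S^1)$, show that a cpc-split extension stays exact after applying $A\otimes_{\min}(-)$ (your retraction $z\mapsto z-(\mathrm{id}_A\otimes s)(\mathrm{id}_A\otimes\pi)(z)$ is the standard argument and is carried out correctly), and finish with the five lemma against the exact sequence $0\to A\otimes C_0(S^1)\otimes C_0((0,1))\to C_{\mathrm{id}_A\otimes\hat q}\to A\otimes C_0(S^1)\to 0$ coming from surjectivity of $\mathrm{ev}_0$. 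Since your comparison map is the canonical one and restricts to the identity on the two outer terms, the five lemma does close the argument. Your route is longer but more robust: it would survive in a noncommutative, merely semisplit setting, whereas the paper's proof exploits that every algebra in sight is commutative (hence nuclear, so one may also observe min $=$ max and quote exactness of $\otimes_{\max}$, as you note at the end).

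In part (2), however, your proposal has a genuine gap precisely in the case your reduction is meant to handle, namely a square with no surjective edge. Write $F=(-)\otimes C_0(X\setminus\{x\})$, $D'=\gamma(C)$, $B'=\beta^{-1}(D')$. Exactness of $F$ gives preservation of kernels and images, and your five-lemma argument does prove $F(B'\times_{D'}C)\cong F(B')\times_{F(D')}F(C)$, since there the edge $\gamma\colon C\to D'$ is surjective. But the pullback of the induced square is $Q=F(B)\times_{F(D)}F(C)=F(\beta)^{-1}(F(D'))\times_{F(D')}F(C)$, so to identify $F(B\times_D C)$ with $Q$ you still need $F(\beta^{-1}(D'))=F(\beta)^{-1}(F(D'))$, i.e. that $F$ preserves preimages of subalgebras; equivalently, that the right-hand vertical arrow in your five-lemma diagram is surjective rather than merely injective. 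This is not among the consequences of exactness you invoke: exactness is the slice-map property for ideals, while preservation of preimages (equivalently, of intersections) of general subalgebras is a Fubini/slice-map statement that follows from the operator approximation property — which $C_0(X\setminus\{x\})$ does have, being nuclear, but which is a strictly stronger input than exactness and is never cited in your argument. The honest fix is to drop the homological detour entirely: since $X$ is compact, $A^{(X,x)}\cong C_0(X\setminus\{x\},A)$, and a continuous function into $B\times_D C\subseteq B\oplus C$ vanishing at infinity is nothing but a pair of such functions into $B$ and into $C$ agreeing in $D$; hence $F$ preserves arbitrary pullbacks with no exactness, surjectivity, or approximation hypotheses. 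This immediate observation is what the paper means by ``(2) is trivial,'' and it is essentially the same gluing principle the paper uses in its proof of part (1).
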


\begin{proof}
(1). Let the diagram
$$
\xymatrix{P \ar[r]\ar[d]& A\otimes C_0(S^1)^{[0,1)\ar[d]}\\
A\otimes C_0(S^1)\ar[r]^{id_A\otimes \hat{q}}& A\otimes C_0(S^1)}
$$
be a pullback diagram. Then $P$ contains the couple of functions
$(f(s),g(s,t))$, such that $f(s^q)=g(s,0)$, $f(0)=0$, $g(0,t)=0$;
$s\in S^1$ $t\in [0,1)$. Therefore the pair  $(f(s),g(s,t))$
defines a continuous function on the cone $\sigma _q$ of the
degree map $S^1\xrightarrow{q} S^1$ with values in $A$ . So, there
is a homomorphism  $P\rightarrow A^{\sigma _q}\simeq A\otimes C_q
$ (which is a morphism of suitable diagrams). Thus the diagram is
pullback and as a consequence we get the isomorphism $A\otimes C_q\simeq
C_{id_A\otimes \hat{q}} $.

(2) is trivial.
\end{proof}
\bigskip

Recall that a family of functors $H=\{H_n\}_{n\in \mathbb{Z}}$
on the category of (separable or $\sigma $-unital )
$GC^*$-algebras (real or complex) \cite{kas1} is said to be
homology theory (cf. \cite{bla}):
 if
 \begin{enumerate}
    \item $H_n$ is  a homotopy invariant functor for any $n\in Z$
    \item for any $*$-homomorphism ($G$-equivariant ) of $\sigma $-unital
    algebras $f:A\rightarrow B$
    there exists a natural twosided
   long exact sequence of abelian groups:
   $$
\dots \rightarrow H_{n+1}(B)\rightarrow H_n(C_f)\rightarrow
H_n(A)\rightarrow H_n(B)\rightarrow H_{n-1}(C_f)\rightarrow \cdots
   $$
   where $C_f$ is the cone of $f$.
\end{enumerate}

\bigskip

\begin{defn}
\label{modq} Under the $q$-finite homology of a homology $H$, $q\geq
2$, we mean a family of functors $H^{(q)}=\{H^{(q)}_n\}_{n\in
\mathbb{Z}}$, where
$$H^{(q)}_n=H_{n-2}(-\otimes C_q).$$
\end{defn}

\bigskip
Below we list  main properties of the $q$-finite homology.

\begin{prop}
\label{propf} Let $H$ be a homology theory on the category of $C^*$-algebras. Then
\begin{enumerate}
    \item $H^{(q)}$ is a homology theory;
    \item there is a twosided long exact sequence of abelian
    groups
    $$
\dots \rightarrow H_{n+1}^{(q)}(A)\rightarrow
H_n(A)\xrightarrow{q\;\cdot} H_n(A)\rightarrow
H_n^{(q)}(A)\rightarrow H_{n-1}(A)\xrightarrow{q\;\cdot}\cdots
    $$
    \item there is a twosided long exact sequence of abelian
    groups
    $$
\dots \rightarrow H_{n+1}^{(p)}(A)\rightarrow
H_n(A)^{(q)}\xrightarrow{\dot{p}}
H_n^{(pq)}(A)\xrightarrow{\acute{q}} H_n^{(p)}(A)\rightarrow
H_{n-1}^{(q)}(A)\xrightarrow{\dot{p}}\cdots
    $$
    \item if homology $H$ has an associative product $$H_n(A)\otimes H_m(B)\rightarrow H_{n+m}(A\otimes
B)$$ then there is an associative product
    $$
H_n^{(p)}(A)\otimes H_{q}^{(q)}(B)\rightarrow
H_{n+m-2}^{(pq)}(A\otimes B).
    $$
\end{enumerate}
\end{prop}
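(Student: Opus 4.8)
The plan is to obtain every statement by feeding a carefully chosen $*$-homomorphism into the cone axiom of $H$ and then identifying the resulting terms by means of Lemma \ref{contenz} and the preceding lemma on $\nu_{p,q}$. Two preliminary facts organize the whole proof. First, writing $SA:=A\otimes C_0(S^1)$ for the suspension, the cone of the zero map $0\to A$ is exactly $SA$, so the cone axiom together with the normalization $H_\ast(0)=0$ yields the suspension isomorphism $H_n(SA)\cong H_{n+1}(A)$. Second, by Lemma \ref{contenz}(1) the algebra $A\otimes C_q$ is the cone of $\mathrm{id}_A\otimes\hat q\colon SA\to SA$; this is precisely the bridge between $H^{(q)}$ and $H$.

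For (1), homotopy invariance of $H^{(q)}_n=H_{n-2}(-\otimes C_q)$ is immediate, since $-\otimes C_q$ preserves homotopies and $H_{n-2}$ is homotopy invariant. For the cone sequence I would first check that $-\otimes C_q$ commutes with the mapping cone: as $C_q$ is nuclear (being assembled from the commutative algebra $C_0(S^1)$), tensoring is exact, so applying $-\otimes C_q$ to the defining extension of $C_f$ gives $C_f\otimes C_q\cong C_{f\otimes\mathrm{id}_{C_q}}$. Applying the cone axiom of $H$ to $f\otimes\mathrm{id}_{C_q}\colon A\otimes C_q\to B\otimes C_q$ and shifting the degree by $-2$ then produces the long exact sequence for $H^{(q)}$. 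For (2), I apply the cone axiom of $H$ to $g:=\mathrm{id}_A\otimes\hat q\colon SA\to SA$, whose cone is $A\otimes C_q$ by Lemma \ref{contenz}(1); rewriting $H_n(SA)=H_{n+1}(A)$ and $H_n(A\otimes C_q)=H^{(q)}_{n+2}(A)$ turns the sequence, after reindexing, into exactly the sequence of (2) \emph{provided} the induced endomorphism $g_\ast$ of $H_{n+1}(A)$ is multiplication by $q$. I expect this degree computation to be the main obstacle: for an abstract $H$ obeying only the two axioms one cannot invoke classical degree theory, so I would argue from the co-$H$-space structure of the circle. The $q$-th power map $\tilde q\colon S^1\to S^1$ is pointed-homotopic to the pinch map $S^1\to\bigvee^q S^1$ followed by the fold map, so dually $\hat q$ factors up to homotopy through the diagonal $C_0(S^1)\to\bigoplus^q C_0(S^1)$ postcomposed with the coaddition; since $SA$ is a suspension, the coaddition induces the group law on $H_n(SA)\cong H_{n+1}(A)$, whence the $q$-fold sum of the identity is $q\cdot$.

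For (3), I apply the cone axiom of $H$ to $\mathrm{id}_A\otimes p_q\colon A\otimes C_{pq}\to A\otimes C_q$. Because tensoring commutes with the mapping cone, its cone is $A\otimes C_{p_q}$, and the homotopy equivalence $\nu_{p,q}\colon C_{p_q}\simeq C_p$ of the first lemma identifies this with $A\otimes C_p$. Rewriting the three families $H_\ast(A\otimes C_{pq})$, $H_\ast(A\otimes C_q)$, $H_\ast(A\otimes C_p)$ as $H^{(pq)}$, $H^{(q)}$, $H^{(p)}$ (degree shift $+2$) yields the long exact sequence of (3), its connecting arrows being the Bockstein-type maps $\dot p$ and $\acute q$ read off from the cone boundary and from $p_q$.

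For (4), the product is assembled in two stages. Starting from classes in $H_{n-2}(A\otimes C_p)$ and $H_{m-2}(B\otimes C_q)$, the given product of $H$ delivers a class in $H_{n+m-4}\bigl((A\otimes B)\otimes(C_p\otimes C_q)\bigr)$ after rearranging tensor factors. To reach $H^{(pq)}_{n+m-2}(A\otimes B)=H_{n+m-4}\bigl((A\otimes B)\otimes C_{pq}\bigr)$ I need a natural pairing $C_p\otimes C_q\to C_{pq}$ of cone algebras, functorial in the power maps via $\widehat{pq}=\hat p\circ\hat q$; composing its induced map with the $H$-product defines the desired product, and associativity is inherited from that of $H$ once the pairings are shown to be mutually coherent. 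Here the genuine difficulty is the construction and coherence of $C_p\otimes C_q\to C_{pq}$ itself: on homology it must realize the inclusion $\mathbb{Z}/\gcd(p,q)\hookrightarrow\mathbb{Z}/pq$ of the subgroup of order $\gcd(p,q)$, and it cannot be chosen compatibly when $\gcd(p,q)$ is even — the very phenomenon that forces the exceptional factor of $2$ in the Browder--Karoubi--Lambre theorem. This, together with the degree identity $g_\ast=q\cdot$ in (2), is where I expect the real work to lie.
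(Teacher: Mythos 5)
Your parts (1)--(3) are correct and follow the same route as the paper: nuclearity lets $-\otimes C_q$ commute with mapping cones (this is the content of Lemma \ref{contenz}), your sequence in (2) is the Puppe sequence of $\mathrm{id}_A\otimes\hat q$ acting on $A\otimes C_0(S^1)$, whose cone is $A\otimes C_q$, and your sequence in (3) is the Puppe sequence of $\mathrm{id}_A\otimes p_q$ combined with the homotopy equivalence $\nu_{p,q}\colon C_{p_q}\to C_p$ of the paper's first lemma. In fact your pinch--fold computation that $(\mathrm{id}_A\otimes\hat q)_*$ is multiplication by $q$ supplies a step the paper passes over in silence (its entire proof of (1)--(2) is one sentence), and the argument is sound: finite additivity of $H$ follows from the cone sequence applied to the split projection $A\oplus B\to B$, after which $\hat q\simeq(\mathrm{pinch})^*\circ(\mathrm{fold})^*$ gives multiplication by $q$ exactly as you describe.

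Part (4), however, contains a genuine gap, and it sits precisely where you place ``the real work''. The pairing $C_p\otimes C_q\to C_{pq}$ is not obstructed; it is a two-line construction, and this is how the paper argues. Functoriality of the mapping cone applied to the identity $\hat q\circ\hat p=\widehat{pq}$ (identity on domains, $\hat q$ on codomains) gives a $*$-homomorphism $\breve q\colon C_p\to C_{pq}$, which is the identity in the first coordinate and substitutes $s\mapsto s^q$ in the circle variable of the cone coordinate; symmetrically one gets $\breve p\colon C_q\to C_{pq}$. Since $C_{pq}$ is a \emph{commutative} $C^*$-algebra, the images of $\breve q$ and $\breve p$ commute, so $a\otimes b\mapsto\breve q(a)\,\breve p(b)$ is a $*$-homomorphism on the algebraic tensor product, and nuclearity extends it to $C_p\otimes C_q$; moreover $\breve r\circ\breve q=\breve{qr}$ holds on the nose, so the resulting system of pairings is strictly associative for all $p,q$, with no parity restriction whatsoever. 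Your claimed obstruction for $\gcd(p,q)$ even imports a phenomenon from the wrong category: the Araki--Toda obstructions (and the order-of-identity phenomenon behind the factor $2$ in Browder--Karoubi--Lambre) live in the stable category, where Moore \emph{spectra} have no diagonal. Here $C_p\cong C_0(Y_p)$ for the Moore \emph{space} $Y_p=S^1\cup_p e^2$, and the pairing above is dual to a pointed map $Y_{pq}\to Y_p\wedge Y_q$ built from the honest diagonal of $Y_{pq}$, which exists for every $p$ and $q$. What your instinct does detect correctly is that a diagonal-based pairing need not realize $\mathbb{Z}/\gcd(p,q)\hookrightarrow\mathbb{Z}/pq$ on coefficients; indeed, since products of reduced $K$-theory classes on a two-dimensional complex vanish for filtration reasons, this pairing is actually trivial on $K_*(C_p)\otimes K_*(C_q)$. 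But the proposition asserts only the \emph{existence} of a natural associative product, not its nontriviality, so this does not block the proof; it only means the statement being proved is weaker than the one you set out to prove, and by aiming at the stronger, genuinely obstructed statement you left the actual claim unproved.
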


\begin{proof} The first and the second parts are immediate consequences of Lemma \ref{contenz}
and  Definition \ref{modq}. The third is a trivial consequence of
the Puppe's exact sequence for the homomorphism $p_q:A\otimes
C_{pq}\rightarrow A\otimes C_q$, Lemma \ref{propf} and Lemma
\ref{contenz}. Finally we have
\begin{multline}
H_n^{(p)}(A)\otimes H_{m}^{(q)}(B)=H_{n-2}(A\otimes C_p)\otimes
H_{m-2}^{(q)}(B\otimes C_q)\rightarrow\\\rightarrow
H_{n+m-4}(A\otimes B \otimes C_p \otimes C_q)\rightarrow
H_{n+m-4}(A\otimes B \otimes C_{pq}) \rightarrow
H_{n+m-2}^{(pq)}(A\otimes B)
\end{multline}
where the product $C_p\otimes C_q\rightarrow C_{pq}$ is defined as
follows. There are natural homomorphisms $\breve{q}:C_p\rightarrow
C_{pq}$, induced by the commutative diagram
$$
\xymatrix{C_p\ar[r]\ar[d]_{\breve{q}} & C_0(S^1) \ar[r]^p\ar[d]^=
& C_0(S^1)\ar[d]^q\\C_{pq}\ar[r] & C_0(S^1) \ar[r]^{pq} &
C_0(S^1),}
$$
and similarly $\breve{p}:C_q\rightarrow C_{pq}$. Since all
algebras are nuclear (in $C^*$-algebraic sense), these
homomorphisms yield a homomorphism (product) $C_p\otimes
C_q\rightarrow C_{pq}$ which is associative in the obvious sense.
\end{proof}

\section{On Torsion Homology theory}
Now we define a new homology theory using the family of $q$-finite
homology theories, $q\geq 2$. Consider the ordered set
$\mathbb{N}_{(2)}=\{q\in \mathbb{N}\;|\; q\geq 2\}$, where $q\leq
q'$ iff $q$ divides $q'$.

Note that if $q'=qs$, then there is a natural transformation of
functors $$\tau ^{(qq')}_n:H^{(q)}_n\rightarrow H^{(q')}_n$$
induced by the homomorphism $q_s$,
$$
\xymatrix{C_q\ar[r]\ar[d]_{q_s} & C_0(S^1) \ar[r]^q\ar[d]^= &
C_0(S^1)\ar[d]^s\\C_{q'}\ar[r] & C_0(S^1) \ar[r]^{q'} & C_0(S^1),}
$$
where $\tau ^{(qq')}_n(A):H^{(q)}_n(A)\rightarrow H^{(q')}_n(A)$
denotes the homomorphism $H_n(id_A\otimes q_s)$. Therefore one has an
inductive system of abelian groups
$$
\{H^{(q)}_n(A), \tau ^{(qq')}_n(A) \}_{q\in \mathbb{N}_{(2)}}
$$
for any $GC^*$-algebra $A$.

\bigskip

\begin{prop}
Let $H$ be a homology theory and $H^{\mathbb{T}}$ be a family of functors
defined by the equality
$$
H^{\mathbb{T}}_n(A)=\underrightarrow{\lim}_q \;\;H^{(q)}_n(A),
\;\;\; n\in \mathbb{Z},\;\;\; q\geq 2,
$$
for any $C^*$-algebra $A$. Then
\begin{enumerate}
    \item $H^{\mathbb{T}}$ is a homology theory $H$ on the category
    $GC^*$-algebras;
    \item There is a twosided long exact sequence of abelian
    groups
$$
\dots \rightarrow H_{n+1}^{\mathbb{T}}(A)\rightarrow
H_n(A)\xrightarrow{r} H_n(A)\otimes \mathbb{Q}\rightarrow
H_n^{\mathbb{T}}(A)\rightarrow H_{n-1}(A)\xrightarrow{r}\cdots
    $$
for any $GC^*$-algebra $A$.
     \item There is a twosided long exact
sequence of abelian groups
$$
\dots \rightarrow H_{n+1}^{\mathbb{T}}(A)\xrightarrow{\hat{q}}
H_{n+1}^{(q)}(A)\xrightarrow{}
H_n^{\mathbb{T}}(A)\xrightarrow{\grave{p}}
H_n^{\mathbb{T}}(A)\xrightarrow{\hat{q}}
H_{n}^{(q)}(A)\xrightarrow{}\cdots
    $$
for any $GC^*$-algebra $A$.
\end{enumerate}

\end{prop}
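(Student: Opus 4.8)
The proof rests on one principle: filtered colimits of abelian groups are exact, so the direct limit over $q\in\mathbb{N}_{(2)}$ of a directed system of long exact sequences is again exact. I would use this three times.

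For (1), recall that each $H^{(q)}$ is a homology theory (Proposition \ref{propf}(1)) and that the maps $\tau^{(qq')}$ are natural transformations. Homotopy invariance of $H^{\mathbb{T}}_n=\varinjlim_q H^{(q)}_n$ is immediate, since homotopic $G$-equivariant $*$-homomorphisms induce the same map on each $H^{(q)}_n$ and hence on the colimit. For the cone axiom, fix $f:A\to B$; the cone sequences of the theories $H^{(q)}$ form a directed system indexed by $\mathbb{N}_{(2)}$ by naturality of $\tau^{(qq')}$ in $f$, and passing to the colimit yields the twosided exact sequence
\[
\cdots\to H^{\mathbb{T}}_{n+1}(B)\to H^{\mathbb{T}}_n(C_f)\to H^{\mathbb{T}}_n(A)\to H^{\mathbb{T}}_n(B)\to\cdots,
\]
which settles (1).

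For (2), I would take the direct limit over $q$ of the sequences of Proposition \ref{propf}(2), viewed as a directed system under $\tau^{(qq')}$. The terms $H^{(q)}_n(A)$ contribute $H^{\mathbb{T}}_n(A)$ by definition, so the crux is identifying the transitions on the two copies of $H_n(A)$. Writing $q'=qs$, the map $\tau^{(qq')}$ is induced by $q_s:C_q\to C_{q'}$, which by the defining diagram in Section 2 covers the identity on the source copy of $C_0(S^1)$ and the power map $\hat s$ on the target copy. Hence the first copy of $H_n(A)$ carries identity transitions, so its colimit is $H_n(A)$, while the second carries multiplication by $s$, so its colimit is the divisibility-ordered system with multiplication maps, namely $H_n(A)\otimes\mathbb{Q}$. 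The middle map $q\cdot$ then descends to the rationalization $r:H_n(A)\to H_n(A)\otimes\mathbb{Q}$: at level $q$ it sends $m\mapsto qm$, which represents $qm/q=m\otimes 1$. This gives exactly the sequence of (2); I expect the identification of these two transition maps to be the main technical point.

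For (3), I would instead fix $q$ and take the direct limit over $p\in\mathbb{N}_{(2)}$ of the Puppe sequences of Proposition \ref{propf}(3),
\[
\cdots\to H^{(p)}_{n+1}(A)\to H^{(q)}_n(A)\xrightarrow{\dot p} H^{(pq)}_n(A)\xrightarrow{\acute q} H^{(p)}_n(A)\to H^{(q)}_{n-1}(A)\to\cdots,
\]
which form a directed system in $p$ through the maps $\tau$. The term $H^{(q)}_n(A)$ is independent of $p$ and carries identity transitions, so it survives unchanged; the subsystems $\{H^{(p)}_n(A)\}_p$ and $\{H^{(pq)}_n(A)\}_p$ are each cofinal in $\{H^{(m)}_n(A)\}_m$, so both colimits are $H^{\mathbb{T}}_n(A)$. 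In the limit $\acute q$ becomes multiplication by $q$ on $H^{\mathbb{T}}_n(A)$, matching the coefficient picture $0\to\mathbb{Z}/q\to\mathbb{Q}/\mathbb{Z}\xrightarrow{q}\mathbb{Q}/\mathbb{Z}\to 0$ in which $H^{\mathbb{T}}$ plays the role of $\mathbb{Q}/\mathbb{Z}$-coefficients and $H^{(q)}$ that of $\mathbb{Z}/q$-coefficients. Up to the degree-shift convention on the $H^{(q)}$-terms this is the asserted sequence. The main obstacle here is to verify that the Puppe sequences are genuinely natural in $p$, i.e. that the maps $C_{pq}\to C_{p'q}$ and $C_p\to C_{p'}$ assemble them into a directed system; once that is in place, the cofinality identifications and exactness of the colimit finish the proof.
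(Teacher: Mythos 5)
Your proposal is correct and follows essentially the same route as the paper: part (1) by exactness of filtered colimits applied to the cone sequences of the $H^{(q)}$, part (2) by taking the colimit of the sequences of Proposition \ref{propf}(2) and identifying the system with identity transitions as $H_n(A)$ and the system with multiplication-by-$\frac{q'}{q}$ transitions as $H_n(A)\otimes\mathbb{Q}$, and part (3) by fixing $q$ and passing to the limit over $p$ in the sequences of Proposition \ref{propf}(3). Your explicit cofinality remark for the systems $\{H^{(p)}_n(A)\}_p$ and $\{H^{(pq)}_n(A)\}_p$ and the identification of the limit maps are points the paper leaves implicit, but the argument is the same.
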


\begin{proof}
According to Proposition \ref{propf} (1) the first part is an easy consequence of the fact that
the direct limit preserves homotopy and excision properties. For the
second part consider the commutative diagram
$$
\xymatrix{
  \cdots \ar[r]&H_{n+1}^{(q)}(A)\ar[r]\ar[d]& H_n(A) \ar[r]^{q}\ar @{=}[d] & H_n(A)
  \ar[r]\ar[d]^{\frac{q'}{q}}& H_n^{(q)}(A)\ar[r]\ar[d]^{\tau ^{(qq')}_n(A)} &\cdots  \\
  \cdots \ar[r]&H_{n+1}^{(q')}(A)\ar[r]& H_n(A) \ar[r]^{q'} & H_n(A)
  \ar[r]& H_n^{(q')}(A)\ar[r] &\cdots ,}
$$
where rows are long twosided exact sequences. By taking the direct limit of these long exact sequences,
one gets the following long twosided exact sequence
$$
\xymatrix{\cdots \ar[r]& H_n(A) \ar[r]^{\hat{q}\;\;\;\;} &
\underrightarrow{\lim}_q H_n(A)
  \ar[r]& H_n^{\mathbb{T}}(A)\ar[r] &H_{n-1}(A)\ar[r] &\cdots }
$$
It is easy to check that the inductive system $\{H_n(A),
\frac{q'}{q}\}$ is isomorphic to the inductive system
$\{H_n(A)\otimes \mathbb{Z}^{\{q\}}, \frac{q'}{q}\}$ , where
$\mathbb{Z}^{\{q\}}= \mathbb{Z}$ for all $q$. Then
$$
\underrightarrow{\lim}_q H_n(A)\simeq H_n(A)\otimes
\underrightarrow{\lim}_q \;\mathbb{Z}^{\{q\}}\simeq H_n(A)\otimes
\mathbb{Q},
$$
since one has the isomorphism $\underrightarrow{\lim}_q \;\mathbb{Z}^{\{q\}}\simeq
\mathbb{Q}$ defined by the map $(q,r)\mapsto \frac{r}{q}$.

For (3) consider the commutative diagram
$$
\xymatrix{
  \cdots \ar[r]&H_{n+1}^{(q)}(A)\ar[r]\ar[d]& H_n^{(p)}(A) \ar[r]^{p}\ar[d] & H_n^{(pq)}(A)
  \ar[r]\ar[d]^{\frac{q'}{q}}& H_n^{(q)}(A)\ar[r]\ar[r]\ar@{=}[d]  &\cdots  \\
  \cdots \ar[r]&H_{n+1}^{(q)}(A)\ar[r]& H_n^{(p')}(A) \ar[r]^{q} & H_n^{(pq)}(A)
  \ar[r]& H_n^{(q)}(A)\ar[r] &\cdots ,}
$$
where rows are long twosided exact sequences given in Proposition
1.4 (3). The direct limit of these long exact sequences with respect to $p$ yields the required
long twosided exact sequence.
\end{proof}

The homology theory $H^{\mathbb{T}}$ is said to be the $\mathrm{torsion}\;\;
\mathrm{homology}$ of the homology $H$.

\begin{cor}
\label{tfbc} Let $\tau :H\rightarrow \tilde{H}$ be a natural
transformation of homology theories. Then $\tau $ induces natural
transformations $\tau ^{(q)}:H^{(q)}\rightarrow \tilde{H}^{(q)}$,
\;\; $\tau ^{\mathbb{T}}:H^{\mathbb{T}}\rightarrow
\tilde{H}^{\mathbb{T}}$ and $\tau _ \mathbb{Q}:H\otimes
\mathbb{Q}\rightarrow \tilde{H}\otimes \mathbb{Q}$. Furthermore
the following conditions are equivalent.
\begin{enumerate}
    \item $\tau (A)$ is an isomorphism for a $C^*$-Algebra $A$.
    \item $\tau ^{\mathbb{T}}(A)$ and $\tau _
    \mathbb{Q}(A)$ is an isomorphism for a $C^*$-Algebra $A$.
    \item $\tau ^{(q)}(A)$ for all prims $q$ and $\tau _
    \mathbb{Q}(A)$ is an isomorphism for a $C^*$-Algebra $A$.
\end{enumerate}
\end{cor}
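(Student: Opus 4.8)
The plan is to base everything on the morphisms of twosided long exact sequences that $\tau$ induces, together with three applications of the five lemma, and to reserve the arithmetic for the one genuinely nonformal implication. First I would record the three induced transformations. Since $H^{(q)}_n(A)=H_{n-2}(A\otimes C_q)$, naturality of $\tau$ forces $\tau^{(q)}_n(A):=\tau_{n-2}(A\otimes C_q)$ to be natural in $A$; because $\tau$ is moreover natural with respect to the homomorphisms $\id_A\otimes q_s$ defining the inductive system $\{H^{(q)}_n(A),\tau^{(qq')}_n(A)\}$, the family $\{\tau^{(q)}\}$ is a morphism of inductive systems and hence passes to the colimit to give $\tau^{\mathbb{T}}=\varinjlim_q\tau^{(q)}$; and $\tau_{\mathbb{Q}}=\tau\otimes\mathbb{Q}$ is the rationalization. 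Throughout I read ``$\tau(A)$ is an isomorphism'' as ``$\tau_n(A)$ is an isomorphism for every $n$'', which is precisely what the five-lemma arguments below require.

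For $(1)\Leftrightarrow(2)$ I would apply the five lemma to the ladder obtained by mapping the twosided long exact sequence relating $H$, $H\otimes\mathbb{Q}$ and $H^{\mathbb{T}}$ into the corresponding one for $\tilde{H}$ via $\tau$, $\tau_{\mathbb{Q}}$ and $\tau^{\mathbb{T}}$; commutativity of the ladder is naturality of that sequence with respect to $\tau$. If $\tau(A)$ is an isomorphism, then so is $\tau_{\mathbb{Q}}(A)=\tau(A)\otimes\mathbb{Q}$, and in the five-term segment centered at $H^{\mathbb{T}}_n(A)$ the outer four verticals are isomorphisms, whence $\tau^{\mathbb{T}}_n(A)$ is too; this gives $(1)\Rightarrow(2)$. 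Conversely, assuming $\tau^{\mathbb{T}}(A)$ and $\tau_{\mathbb{Q}}(A)$ are isomorphisms, the five-term segment centered at $H_n(A)$ has its outer four verticals among $\tau^{\mathbb{T}}$ and $\tau_{\mathbb{Q}}$, so $\tau_n(A)$ is an isomorphism, giving $(2)\Rightarrow(1)$. The same argument applied to the $q$-finite sequence of Proposition \ref{propf}(2), which involves only $H(A)$ and $H^{(q)}(A)$ so that no change of algebra is needed, yields $(1)\Rightarrow(3)$: $\tau(A)$ being an isomorphism forces each $\tau^{(q)}_n(A)$ to be one, and $\tau_{\mathbb{Q}}(A)$ is an isomorphism as before.

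It remains to prove $(3)\Rightarrow(2)$, and this is the step carrying the real content; I expect it to be the main obstacle. The difficulty is that $H^{\mathbb{T}}$ is the colimit over all $q\geq 2$ ordered by divisibility, in which the primes are not cofinal, so one cannot simply restrict the colimit to primes. Instead I would upgrade ``isomorphism on primes'' to ``isomorphism on all $q$'' arithmetically, using the long exact sequence of Proposition \ref{propf}(3) relating $H^{(p)}$, $H^{(q)}$ and $H^{(pq)}$. By strong induction on $N\geq 2$: if $N$ is prime this is hypothesis $(3)$; if $N=p\cdot m$ with $p$ prime and $2\leq m<N$, then $\tau^{(p)}(A)$ (prime case) and $\tau^{(m)}(A)$ (induction hypothesis) are isomorphisms, so the five lemma applied to the sequence of Proposition \ref{propf}(3) for the pair $(p,m)$ makes $\tau^{(N)}(A)=\tau^{(pm)}(A)$ an isomorphism. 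Hence $\tau^{(q)}(A)$ is an isomorphism for every $q\geq 2$, and since a filtered colimit of isomorphisms is an isomorphism, $\tau^{\mathbb{T}}(A)=\varinjlim_q\tau^{(q)}(A)$ is an isomorphism; together with the hypothesis that $\tau_{\mathbb{Q}}(A)$ is an isomorphism this is exactly $(2)$. Combined with the already established chain $(2)\Rightarrow(1)\Rightarrow(3)$, all three conditions are equivalent, the one nonformal point being the circumvention of the non-cofinality of the primes through the inductive use of the $(p,q,pq)$ exact sequence.
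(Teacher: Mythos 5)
Your proof is correct, and its core is the same as the paper's: the five lemma applied to the two-sided long exact sequences of Propositions 1.4 and 2.1, mapped by $\tau$ into the corresponding sequences for $\tilde{H}$. The paper proves $(1)\Leftrightarrow(2)$ exactly as you do, and handles $(2)\Leftrightarrow(3)$ by citing two ladders: the sequence $\cdots\to H^{(q)}_{n+1}(A)\to H^{\mathbb{T}}_n(A)\to H^{\mathbb{T}}_n(A)\to H^{(q)}_n(A)\to\cdots$ of Proposition 2.1(3), which by the five lemma gives $(2)\Rightarrow(3)$ (each $H^{(q)}$ term is flanked on both sides by $H^{\mathbb{T}}$ terms), and the $(p,q,pq)$ ladder of Proposition 1.4(3). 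Your routing differs only cosmetically in that you obtain $(1)\Rightarrow(3)$ directly from the mod-$q$ Bockstein sequence of Proposition 1.4(2) rather than passing through $(2)$. Where you genuinely add value is in $(3)\Rightarrow(2)$: the paper merely displays the $(p,q,pq)$ ladder and invokes the five lemma, leaving unstated both the strong induction on the number of prime factors and the passage to the colimit. Note that the five lemma applied to the paper's first ladder cannot by itself produce $\tau^{\mathbb{T}}$ from the $\tau^{(q)}$, since the two $H^{\mathbb{T}}$ entries sit adjacent to one another in that sequence, so some such argument is unavoidable; your explicit induction plus the fact that a levelwise isomorphism of directed systems induces an isomorphism on colimits (together with the correct observation that the primes are not cofinal in $\mathbb{N}_{(2)}$ under divisibility) is exactly the completion that the paper's terse proof presupposes.
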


\begin{proof} $(1)\cong (2)$ is consequence of the Five Lemma and
following commutative diagram of twosided long exact sequence:
$$
\xymatrix{..\ar[r]&H^{\mathbb{T}}_{n+1}(A)\ar[r]\ar[d]^{\tau
^{\mathbb{T}}(A)} &H_n(A)\ar[r]\ar[d]^{\tau (A)} &H_n(A)\otimes
\mathbb{Q}\ar[r]\ar[d]^{\tau _
    \mathbb{Q}(A)} &H^{\mathbb{T}}_{n}(A)\ar[r]\ar[d]^{\tau ^{\mathbb{T}}(A)} &..\\
..\ar[r]&\tilde{H}^{\mathbb{T}}_{n+1}(A)\ar[r]
&\tilde{H}_n(A)\ar[r] &\tilde{H}_n(A)\otimes \mathbb{Q}\ar[r]
&\tilde{H}^{\mathbb{T}}_{n}(A)\ar[r]
&..}
$$

 $(2)\cong (3)$ is a consequence of the Five Lemma and
following commutative diagrams of the twosided long exact
sequence:
$$
\xymatrix{..\ar[r]&H_{n+1}^{(q)}(A)\ar[r]\ar[d]^{\tau ^{(q)}}
&H^{\mathbb{T}}_{n}(A)\ar[r]\ar[d]^{\tau ^{\mathbb{T}}}
&H^{\mathbb{T}}_{n}(A)\ar[r]\ar[d]^{\tau ^{\mathbb{T}}}
 &H_{n}^{(q)}(A)\ar[r]\ar[d]^{\tau ^{(q)}} &..\\
..\ar[r]                &\tilde{H}_{n+1}^{(q)}(A)\ar[r]
&\tilde{H}^{\mathbb{T}}_{n}(A)\ar[r]
&\tilde{H}^{\mathbb{T}}_{n}(A)\ar[r] &\tilde{H}
^{(q)}_{n}(A)\ar[r] &..}
$$

and
$$
\xymatrix{..\ar[r]&H_{n+1}^{(q)}(A)\ar[r]\ar[d]^{\tau ^{(q)}}
&H_{n}^{(p)}(A)\ar[r]\ar[d]^{\tau ^{(p)}}
&H_{n}^{(pq)}(A)\ar[r]\ar[d]^{\tau ^{(pq)}}
 &H_{n}^{(q)}(A)\ar[r]\ar[d]^{\tau ^{(q)}} &\dots\\
..\ar[r] &\tilde{H}_{n+1}^{(q)}(A)\ar[r]
&\tilde{H}_{n}^{(p)}(A)\ar[r] &\tilde{H}_{n}^{(pq)}(A)\ar[r]
&\tilde{H}_{n}^{(q)}(A)\ar[r] &..}
$$

\end{proof}

\section{Applications to $KK$-theory}
\subsection{Torsion and finite $KK$-theories}

By considering $KK^G(A,-)$ as a homology theory and according to
section 1 we define finite, torsion and rational $KK^G$-theories
for all integer $n$ as follows.

\bigskip

\begin{defn}
\begin{equation}\label{rmodp}
KK^G_n(A,B;\zq)=KK^G_{n-2}(A,B\otimes C_q).
\end{equation}
\end{defn}

\bigskip

\begin{defn}
$$KK^G_n(A,B,T)=\underrightarrow{\lim}_q\;KK^G_n(A,B;\zq)$$.
\end{defn}

\bigskip

\begin{defn}
$$KK^G_{n}(A,B;Q)= KK^G_{n}(A,B)\otimes Q.$$
\end{defn}

\bigskip

Our definitions of finite and rational $KK^G$ differ from the
exisiting definitions of finite and rational $KK$-theories ([3],
23.15.6-7). In effect, here we compare the two versions of the
definitions of finite and rational KK- theories.

1. Let N be the smallest class of separable C¤-algebras with the following properties:

 (N1) N contains field complex numbers;

 (N2) N is closed under countable inductive limits;

 (N3) if $0 \rightarrow A \rightarrow D \rightarrow B \rightarrow
 0$

  is an exact sequence, and two of them are in
N, then so is the third;

 (N4) N is closed under $KK$-equivalence.

Let $D$ be a $C^*$-algebra in N with $K_{0}(D) = Z_{p}, K_{1}(D) =
0$.

 Define

$$
KK_{n}(A;B;Zp) = KK_{n}(A;B\otimes D).$$

As noted in (\cite{bla}), so defined $KKS$-groups are independent
of the choice of $D$. Bellow we show that the above definition is
equivalent to our definition. One has $K_{0}(C_m) = Z_m$ and
$K_{1}(C_{m}) = 0$. This is an easy consequence of the Bott
periodicity theorem and the two-sided long exact sequence

$$
 ....\rightarrow  K_{2}(C_{0}(S^{1})) \rightarrow K_{2}(C_{0}(S^{1})) \rightarrow K_{2}(C_{m}) \rightarrow K_{1}(C_{0}(S^{1})) \rightarrow ...,
$$

since $K_{1}(C_{0}(S^{1})) = 0.$

Therefore our definition of finite $KK$-theory agrees to its
definition in the sense of [1] taking into account the following
isomorphism induced by the Bott periodicity theorem:

$$ KK_{n}(A;B\otimes C_{m}(S^{1})) \simeq KK_{n-2}(A;B\otimes
C_{m}(S_{1})). $$

2. The rational $KK$-theory is defined in ([1], 23.15.6) by the
following manner. Let $D$ be a $C^*$-algebra in $N$ with $K_{0}(D)
= Q, K_{1}(D) = 0$. Define

$$
KK_{n}(A;B;Q) = KK_{n}(A;B \otimes D).
$$

In general $KK_{n}(A;B;Q) \neq KK_{n}(A;B) \otimes Q$ (\cite{bla},
23.15.6). For example,
$$KK(D;C;Q) = Q\;\;\;\text{and}\;\;\;KK(D;C)\otimes Q = 0.$$
This means that our rational $KK$-theory differs from that of
\cite{bla}.

According to results of the previous section one has the following
properties of $q$-finite and torsion $KK$-theories.

\begin{enumerate}
    \item The groups $KK^G(A,B;\zq)$ have Bott periodicity
property and satisfy the excision property relative to both
arguments.
    \item there is a natural twosided exact sequence:
\begin{multline}\label{rexc}
 \cdots  \rightarrow KK^G_n(A,B) \xrightarrow{q\times } KK^G_n(A,B)
 \rightarrow KK^G_n(A,B;\zq) \rightarrow \\\rightarrow KK^G_{n-1}(A,B)
  \xrightarrow{q\times } KK^G_{n-1}(A,B) \rightarrow \cdots
\end{multline}

    \item there is a natural twosided exact sequence:
\begin{multline}\label{rexc1}
 \xrightarrow{\acute{q}}
 KK^G_n(A,B,\mathbb{Z}_{pq})
 \xrightarrow{\grave{p}} KK^G_n(A,B;\zq) \rightarrow \\\rightarrow
 KK^G_{n-1}(A,B,\mathbb{Z}_p )
  \xrightarrow{\acute{q}} KK^G_{n-1}(A,B,\mathbb{Z}_{pq}) \rightarrow
\end{multline}
    \item There is a associative product
\begin{equation}
\label{podass}KK^G_n(A,B;\mathbb{Z}_{p})\otimes
KK^G_m(A,B;\mathbb{Z}_{q})\rightarrow
KK^G_{n+m-2}(A,B;\mathbb{Z}_{pq})
\end{equation}
 \item there is a natural twosided exact sequence:
\begin{multline}\label{rexc2}
\dots  \rightarrow
 KK^G_n(A,B,\t)
\xrightarrow{\grave{q}} KK^G_n(A,B;\t) \xrightarrow{\breve{p}}
 KK^G_{n}(A,B,\mathbb{Z}_q )\\
  \rightarrow KK^G_{n-1}(A,B,\t) \rightarrow \dots
\end{multline}
\item there is a natural twosided exact sequence:
\begin{multline}\label{rexc2}
\dots  \rightarrow
 KK^G_n(A,B)
\xrightarrow{r } KK^G_n(A,B;\q) \xrightarrow{\breve{t}}
 KK^G_{n}(A,B,\t)\\
  \rightarrow KK^G_{n-1}(A,B,) \rightarrow \dots
\end{multline}
\end{enumerate}

In addition there is an associative product
\begin{equation*}
\label{vaxel} KK^G_n(A,B;\q)\otimes KK^G_m(B,C;\q)\rightarrow
KK^G_{n+m}(B,C;\q)
\end{equation*}
Tensor product is considered over ring of integers. The product is
a composition of the isomorphism:
\begin{multline}\label{asqr}
    (KK^G_{n}(A,B)\otimes \mathbb{Q})\otimes (KK^G_{n}(B,C)\otimes \mathbb{Q})\cong\\
   \cong
   (KK^G_{n}(X;A,B)\otimes KK^G_{n}(X;B,C))\otimes
   (\mathbb{Q}\otimes \mathbb{Q}),
\end{multline}
which is the composition of the twisting and associativity
isomorphisms of tensor product, and a homomorphism
\begin{equation}
(KK^G_{n}(A,B)\otimes KK^G_{n}(B,C))\otimes
   (\mathbb{Q}\otimes \mathbb{Q})\longrightarrow KK^G_{n}(A,C)\otimes
   \mathbb{Q}
\end{equation}
defined by a map $(f\otimes r)\otimes (f'\otimes r')\mapsto
(f\cdot f)'\otimes rr'$, where $f\cdot f$ is Kasparov product of
$f$ and $f'$.

\bigskip

Thus we can form an additive category $KK^G_\mathbb{Q}$, where
$GC^*$-algebras are objects and the group of morphisms from $A$ to
$B$ is given by the equality
\begin{equation}\label{rR}
KK^G_n(A,B;\mathbb{Q})=KK^G_{n}(X;A,B)\otimes \mathbb{Q}.
\end{equation}

\bigskip

There is a natural additive functor
$$
Rat:KK^G\longrightarrow KK^G_\mathbb{Q}
$$
which is identity on objects, and on morphisms is defined by the map
$f\mapsto f\otimes 1$. It is clear that $Rat$ is an additive
functor.

The result below says that $KK^G_\mathbb{Q}$ is a
 bivariant theory on the category of separable
$GC^*$-algebra and it is said to be the \textit{rational}
$KK^G$-theory.

\begin{thm}
The additive category $KK^G_\mathbb{Q}$ is a bivariant theory on
the category of separable $GC^*$-algebras, i.e. has all
fundamental properties of usual bivariant $KK$-theory. Besides,
$KK^G_\mathbb{T}$ is a bimodule on the category $KK^G$ such that
it is cohomological functor relative the first argument and
homological functor relative to the second argument satisfying the
Bott periodicity property.
\end{thm}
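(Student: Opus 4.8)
The plan is to treat the two assertions separately: the rational theory follows from exactness of rationalization, while the torsion theory follows from the action of ordinary $KK^G$ on the finite theories passed to a direct limit.

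For $KK^G_\mathbb{Q}$, the guiding observation is that $-\otimes\mathbb{Q}$ is an exact functor on abelian groups, $\mathbb{Q}$ being flat over $\mathbb{Z}$, and that it commutes with arbitrary direct sums. First I would transport each defining property of bivariant $KK$ through this functor. Homotopy invariance, stability and additivity hold in $KK^G$ and survive because $-\otimes\mathbb{Q}$ carries isomorphisms to isomorphisms and preserves direct sums. Every twosided long exact sequence of $KK^G$ in either variable stays exact after applying $-\otimes\mathbb{Q}$, by flatness, which yields excision and the mapping-cone sequences for $KK^G_\mathbb{Q}$. Bott periodicity $KK^G_n\cong KK^G_{n+2}$ becomes the corresponding isomorphism after tensoring. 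Finally the composition product is the map constructed in (\ref{asqr}) and the displayed homomorphism following it, namely the Kasparov product tensored with the multiplication $\mathbb{Q}\otimes\mathbb{Q}\to\mathbb{Q}$; associativity and the existence of the unit $\mathrm{id}_A\otimes 1\in KK^G_0(A,A;\mathbb{Q})$ follow from those of the Kasparov product together with those of the ring $\mathbb{Q}$. This exhibits $KK^G_\mathbb{Q}$ as a bivariant theory with units.

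For $KK^G_\mathbb{T}$, I would first equip each finite theory $KK^G_n(A,B;\mathbb{Z}_q)=KK^G_{n-2}(A,B\otimes C_q)$ with a natural action of the category $KK^G$. Given $\alpha\in KK^G_i(A',A)$ and $\beta\in KK^G_j(B,B')$, the left and right actions are defined by Kasparov multiplication, using $\alpha$ on the first variable and $\beta\otimes\mathrm{id}_{C_q}\in KK^G_j(B\otimes C_q,B'\otimes C_q)$ on the second. Functoriality of $KK^G$ makes this contravariant in the first argument and covariant in the second, and associativity of the Kasparov product yields the module axioms, with the unit of $KK^G$ acting as the identity. Passing to the direct limit over $q$, I would check that these actions are compatible with the transition maps $\tau^{(qq')}$ induced by $q_s\colon C_q\to C_{q'}$, so that they descend to a bimodule structure on $KK^G_\mathbb{T}=\varinjlim_q KK^G_n(A,B;\mathbb{Z}_q)$. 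The long exact sequence in the second variable is supplied by the homology-theory structure of Proposition \ref{propf} and its torsion analogue; exactness in the first variable follows because ordinary $KK^G(-,D)$ is cohomological for every coefficient algebra $D$, applied with $D=B\otimes C_q$ and taken to the limit, direct limits being exact. Bott periodicity is inherited from $KK^G$ through the defining degree shift and is preserved under the limit.

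The hard part will be the compatibility check in the torsion case: one must verify that multiplying an ordinary Kasparov class by $\mathrm{id}_{C_q}$ commutes, up to the natural identifications, with the structural maps $q_s\colon C_q\to C_{q'}$ of the inductive system, so that the action genuinely passes to the direct limit; for the left action this is bifunctoriality of the Kasparov product, and for the right action it is naturality of the exterior product in the two tensor factors. It is also here that the absence of a unit becomes visible: the internal product of finite theories in (\ref{podass}) both lowers degree by two and multiplies the moduli, $\mathbb{Z}_p\otimes\mathbb{Z}_q\to\mathbb{Z}_{pq}$, so no degree-zero class can serve as a two-sided identity. This is precisely why $KK^G_\mathbb{T}$ is only a bimodule over $KK^G$ rather than a bivariant category in its own right.
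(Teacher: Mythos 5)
Your proposal is correct and follows essentially the same route as the paper: the paper's entire written proof is the single observation that $\mathbb{Q}$ is flat over $\mathbb{Z}$, so tensoring preserves the long exact sequences, with the torsion bimodule assertion implicitly delegated to the finite/torsion homology machinery of Sections 1--2 and the listed properties of $KK^G_n(A,B;\mathbb{Z}_q)$. Your write-up simply supplies what the paper leaves unsaid — the unit $\mathrm{id}_A\otimes 1$ and the product on the rational side, and on the torsion side the explicit $KK^G$-action via Kasparov product with $\beta\otimes\mathrm{id}_{C_q}$ together with its compatibility with the transition maps $\tau^{(qq')}$ — which is exactly the intended argument (your closing heuristic that the degree shift in the finite product obstructs a unit is slightly off, since Bott periodicity absorbs the shift and the genuine obstruction is the $\mathbb{Z}_p\otimes\mathbb{Z}_q\to\mathbb{Z}_{pq}$ moduli multiplication forcing torsion, but this remark is not load-bearing).
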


\begin{proof}
This is easy consequence of the fact that $\q$ is a flat
$\mathbb{Z}$-module and the tensor product on a flat module
preserves exactness.
\end{proof}

\subsection{A look at Baum-Connes Conjecture}

In the formulation of Baum-Connes Conjecture a crucial role play the
groups $K^{top}_n(G,A)$,  so called the topological K-theory of
$G$ with coefficients in $A$, and the homomorphism
$$
\mu _A : K^{top}_n(G,A)\rightarrow K_n(G \ltimes _r A),
$$
which is called the Baum-Connes assembly map. The Baum-Connes
Conjecture for $G$ with coefficients in $A$ asserts that this map is
an isomorphism. Note that $K^{top}_n(G,\;-)$ and $K_n(G\ltimes
_r\;-)$ are homology theories  in the sense that we have defined
in the first section (cf. \cite{mene}).  Therefore we have
rational, torsion and finite versions of Baum-Connes Conjecture:
\begin{itemize}
    \item (Rational version) the assembly map
    $$\mu _A\otimes id_{\q} : K^{top}_n(G,A)\otimes  \q \rightarrow K_n(G \ltimes _r A)\otimes  \q
$$ is an isomorphism;
    \item (Finite version) the $q$-finite assembly map
    $$\mu _A^{(q)} : K^{top}_n(G,A;\zq) \rightarrow K_n(G \ltimes _r A;\zq)\otimes
$$ is an isomorphism;
    \item (Torsion version) the torsion assembly map
    $$\mu _A^{(q)} : K^{top}_n(G,A;\mathbb{T)}) \rightarrow K_n(G \ltimes _r A;\mathbb{T)})
$$ is an isomorphism.
\end{itemize}

According to Corollary \ref{tfbc}, we have the following theorem
\begin{thm}
\label{bcceq} The following Conjectures are equivalent.
\begin{enumerate}
    \item Baum-Connes Conjecture;
    \item Baum-Connes rational and torsion Conjectures;
    \item Baum-Connes rational and $q$-finite Conjectures for all
    primes.
\end{enumerate}
\end{thm}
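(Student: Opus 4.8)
The plan is to observe that the entire theorem is a direct instance of Corollary \ref{tfbc}, once the Baum--Connes assembly map is recognized as a natural transformation of homology theories. First I would set $H_n(-)=K^{top}_n(G,-)$ and $\tilde H_n(-)=K_n(G\ltimes_r-)$. As recalled just above the statement (cf.\ \cite{mene}), both $H$ and $\tilde H$ are homology theories on the category of separable $GC^*$-algebras in the sense of Section 1: each is homotopy invariant and carries the twosided long exact cone sequence of axiom (2). The assembly maps $\mu_A$ then assemble into a single natural transformation $\mu:H\to\tilde H$, since $\mu_A$ is natural in the coefficient algebra $A$ and is compatible with the boundary maps of the cone sequences.

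Next I would apply Corollary \ref{tfbc} to $\tau=\mu$. The corollary produces the induced transformations $\mu^{(q)}$, $\mu^{\mathbb{T}}$ and $\mu_{\q}=\mu\otimes\id_{\q}$, and these are exactly the $q$-finite, torsion and rational assembly maps occurring in the three versions of the conjecture. Unwinding the definitions, the assertion that $\mu_A$ is an isomorphism is the Baum--Connes Conjecture, i.e.\ condition (1) of the corollary; the assertion that $\mu_A^{\mathbb{T}}$ and $\mu_A\otimes\id_{\q}$ are isomorphisms is precisely the conjunction of the torsion and rational conjectures, i.e.\ condition (2); and the assertion that $\mu_A^{(q)}$ for all primes $q$ together with $\mu_A\otimes\id_{\q}$ are isomorphisms is the conjunction of the $q$-finite conjectures for all primes and the rational conjecture, i.e.\ condition (3). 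The three equivalences of the theorem are therefore literally the equivalences $(1)\Leftrightarrow(2)\Leftrightarrow(3)$ of Corollary \ref{tfbc}, read off for the single algebra $A$ under consideration.

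The only point requiring genuine verification, and what I expect to be the main obstacle, is the claim that $\mu$ is a natural transformation of homology theories in the strong sense demanded by the corollary: that it induces a morphism of the long exact cone sequences, and hence, after applying the $(q)$-, $\mathbb{T}$- and $\q$-constructions of Sections 1--2, a commutative ladder of twosided long exact sequences of the kind displayed in the proof of Corollary \ref{tfbc}. Granting this naturality of the Baum--Connes map with respect to the cone (six-term) sequences, which is standard, the internal Five Lemma arguments of Corollary \ref{tfbc} deliver the equivalences with no further computation.
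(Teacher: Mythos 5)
Your proposal is correct and follows exactly the paper's route: the paper derives Theorem \ref{bcceq} directly from Corollary \ref{tfbc} applied to the assembly map $\mu: K^{top}_n(G,-)\rightarrow K_n(G\ltimes_r -)$, citing \cite{mene} for the fact that both sides are homology theories in the sense of Section 1. Your additional remark about verifying compatibility of $\mu$ with the long exact cone sequences is a point the paper leaves implicit, so you have if anything been more careful than the original.
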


\section{Remarks on finite algebraic and topological $K$-theories}

We begin with some preliminary definitions and properties.
In \cite{brw}, Browder has defined algebraic
$K$-theory of an unital ring with coefficients in $\mathbb{Z}/q$,
$q\geq 2$ as follows:
$$
K_n(R;\mathbb{Z}/q)=\pi _n(BGL(R)^+;\mathbb{Z}/q)
$$
by using so called homotopy groups with coefficients
in $\mathbb{Z}/q$.

\textit{Remark}. Bellow "Algebraic $K$-theory of an
unital ring with coefficients in $\mathbb{Z}/q$"  will be replaced
by "$q$-finite algebraic $K$-theory of an unital ring".

For our purposes we use equivalent definition used in \cite{arin}:
$$
K_{n+1}^a(R;\mathbb{Z}/q)=\pi _n(F_q(BGL(R)^+)).
$$
Here, in general, $F_q(X)$ is defined as the homotopy fiber of the
$q$-power map of a loop space $X=\Omega Y$ (see \cite{arin}).

There exists similar interpretation for $q$-finite topological
$K$-theory of $C^*$-al\-
gebras.
 If $A$ is an unital $C^*$-algebra. Then $GL(A)$ has the
 standard topology induced by the norm in $A$. Denote this topological
 group by $GL^t(A)$. It is known that $GL^t(A)$ and $\Omega
 B(GL^t(A))$ are homotopy equivalent spaces. Therefore topological
 $K$-groups may be defined equivalently by the equality
 $$
K_n^t(A)=\pi _n(B(GL^t(A))),\;\;\;\;n\geq 1.
 $$
Therefore one can define the $q$-finite topological $K$-theory as
follows:
$$
K_{n+1}^t(R;\mathbb{Z}/q)=\pi _n(F_q(B(GL^t(R)))).
$$
We have natural, up to homotopy, maps
$$B(GL(A))^+\rightarrow B(GL^t(A))$$ and $$F_qB(GL(A))^+\rightarrow
F_qB(GL^t(A)).$$
  Therefore we have natural homomorphisms
$$
\alpha _n:K^a_n(A)\rightarrow
K^t_n(A)\;\;\;\;\;\;\;\;\text{and}\;\;\;\;\;\;\;\alpha
_{n,q}:K^a_n(A,\mathbb{Z}/q)\rightarrow
K^t_n(A,\mathbb{Z}/q),\;\;\;\;\;\;
$$
$n\geq 1,\;\;q\geq 2.$
\begin{prop}
Let $A$ be a $C^*$-algebra and $\mathcal{K}$ be a $C^*$-algebra of
compact operators on a separable Hilbert space. Then the natural
homomorphisms
$$
\varepsilon ^{-1}\alpha _{n,q}:K^a_n(A\otimes
\mathcal{K},\mathbb{Z}/q)\rightarrow
K^t_n(A,\mathbb{Z}/q)\;\;\;n\geq 1,\;\;q\geq 2,
$$
are isomorphisms, where $\varepsilon
:K^t_n(A;\mathbb{Z}_q)\xrightarrow{\cong} K^t_n(A\otimes
\mathcal{K};\mathbb{Z}_q)$ is the isomorphism of stability for the
finite topological $K$-theory of $C^*$-algebras.
\end{prop}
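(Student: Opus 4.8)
The plan is to reduce the statement to the classical comparison between algebraic and topological $K$-theory of a \emph{stable} $C^*$-algebra. Since $\varepsilon$ is, by hypothesis, the stability isomorphism $K^t_n(A;\mathbb{Z}/q)\xrightarrow{\cong}K^t_n(A\otimes\mathcal{K};\mathbb{Z}/q)$, the composite $\varepsilon^{-1}\alpha_{n,q}$ is an isomorphism if and only if the comparison map $\alpha_{n,q}\colon K^a_n(A\otimes\mathcal{K};\mathbb{Z}/q)\to K^t_n(A\otimes\mathcal{K};\mathbb{Z}/q)$ is an isomorphism. Writing $B=A\otimes\mathcal{K}$ for the stable algebra, everything thus comes down to comparing the $q$-finite algebraic and topological $K$-theories of $B$.

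First I would record the two Bockstein-type exact sequences carried by these finite theories. Both $K^a_{n}(B;\mathbb{Z}/q)$ and $K^t_{n}(B;\mathbb{Z}/q)$ are obtained by applying the fibre $F_q$ of the $q$-power map to the spaces $B(GL(B))^+$ and $B(GL^t(B))$, exactly as in the definitions above. The defining fibration $F_q(X)\to X\xrightarrow{q}X$ yields, on homotopy groups, the long exact sequence
$$
\cdots\to K_n(B)\xrightarrow{q}K_n(B)\to K_n(B;\mathbb{Z}/q)\to K_{n-1}(B)\xrightarrow{q}K_{n-1}(B)\to\cdots
$$
in each of the two flavours. Because the natural map $B(GL(B))^+\to B(GL^t(B))$ commutes up to homotopy with the $q$-power maps, functoriality of $F_q$ produces a commutative ladder relating the algebraic and topological sequences, with vertical arrows the integral comparison maps $\alpha_n$ and the finite ones $\alpha_{n,q}$.

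The decisive input — and the step I expect to be the main obstacle — is the \emph{integral} comparison theorem: for the stable $C^*$-algebra $B=A\otimes\mathcal{K}$ the map $\alpha_n\colon K^a_n(B)\to K^t_n(B)$ is an isomorphism for all $n\ge 0$. This is precisely Karoubi's conjecture, established by Suslin and Wodzicki via excision in algebraic $K$-theory for $H$-unital algebras together with Suslin's rigidity; it is the only genuinely deep ingredient, the remainder being formal. Granting it, the five lemma applied to the commutative ladder of the two Bockstein sequences shows that $\alpha_{n,q}$ is an isomorphism for every $n\ge 1$ (at $n=1$ one uses in addition that $\alpha_0$ is an isomorphism, again part of the integral statement).

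Finally I would record a slightly cleaner variant that bypasses the diagram chase: the integral isomorphism says exactly that $B(GL(B))^+\to B(GL^t(B))$ is a weak equivalence in the relevant range, and since it is compatible with the $q$-power maps it induces a weak equivalence $F_q(B(GL(B))^+)\to F_q(B(GL^t(B)))$; passing to homotopy groups gives that $\alpha_{n,q}$ is an isomorphism directly. Composing with the stability isomorphism $\varepsilon^{-1}$ then shows that $\varepsilon^{-1}\alpha_{n,q}$ is an isomorphism for $n\ge 1$ and $q\ge 2$, as claimed.
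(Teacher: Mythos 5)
Your proposal is correct and follows essentially the same route as the paper: reduce via the stability isomorphism $\varepsilon$ to comparing the $q$-finite algebraic and topological $K$-theories of the stable algebra $A\otimes\mathcal{K}$, set up the commutative ladder of Bockstein exact sequences, invoke the Suslin--Wodzicki theorem (Karoubi's conjecture) for the integral comparison maps $\alpha_n$, and conclude by the Five Lemma. Your attention to the edge case $n=1$ (using that $\alpha_0$ is also an isomorphism) is in fact slightly more careful than the paper, whose proof as written only asserts the conclusion for $n\geq 2$.
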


\begin{proof}
It is enough to show that the homomorphism $$\alpha _{n,q}
:K^a_n(A\otimes \mathcal{K};\mathbb{Z}_q)\rightarrow
K^t_n(A\otimes \mathcal{K};\mathbb{Z}_q)$$ is an isomorphism. To
this end consider the following commutative diagram
$$
\xymatrix{\cdots \ar[r]& K^a_{n+1}(A\otimes
\mathcal{K};\mathbb{Z}_q)\ar[r] \ar[d]^{\alpha _{n+1,
q}}&K^a_n(A\otimes \mathcal{K})\ar[r]^{\times q}\ar[d]^{\alpha _n}
&K^a_n(A\otimes \mathcal{K})\ar[r]\ar[d]^{\alpha _n} & \cdots \\
\cdots \ar[r]& K^t_{n+1}(A\otimes \mathcal{K};\mathbb{Z}_q)\ar[r]
&K^t_n(A\otimes \mathcal{K})\ar[r]^{\times q} &K^t_n(A\otimes
\mathcal{K})\ar[r] & \cdots }.
$$

Since the natural homomorphisms $\alpha _n:K^a_n(A\otimes
\mathcal{K})\rightarrow K^t_n(A\otimes \mathcal{K})$ are
isomorphisms for any integer $n$ \cite{suw}, then by the Five
Lemma the homomorphism $$K^a_{n}(A\otimes
\mathcal{K};\mathbb{Z}_q)\rightarrow K^t_{n}(A\otimes
\mathcal{K};\mathbb{Z}_q) $$ is an isomorphism too for all $n\geq
2$.

\end{proof}

\section{Browder-Karoubi-lambre 's theorem for finite $KK$-theory}

One has the following interpretation of the $q$-finite topological
$K$-theory.
\begin{prop}
There are a natural isomorphisms
$$
K^t_{n}(A;\mathbb{Z}/q)\;\cong \;K_{n-2}^t(A\otimes C_q),
$$
for all $n\geq 1$ and $q\geq 2$.
\end{prop}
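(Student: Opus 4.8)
The plan is to reconcile the two models of finite topological $K$-theory introduced above---the homotopy-fibre model $K^t_{n}(A;\mathbb{Z}/q)=\pi_{n-1}(F_q(B(GL^t(A))))$ and the cone model $K^t_{n-2}(A\otimes C_q)$---by exhibiting both as the middle term of the ``multiplication by $q$'' exact sequence attached to $K^t_*(A)$. The bridge is the long exact sequence of a mapping cone whose connecting map is $\times q$. First I would apply Lemma \ref{contenz}(1) to rewrite $A\otimes C_q$ as the mapping cone $C_{id_A\otimes \hat q}$ of the $*$-homomorphism $id_A\otimes \hat q\colon A\otimes C_0(S^1)\to A\otimes C_0(S^1)$. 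Since $K^t$ is a homology theory, the cone axiom supplies the two-sided exact sequence
$$
\cdots \to K^t_{k+1}(A\otimes C_0(S^1))\to K^t_k(A\otimes C_q)\to K^t_k(A\otimes C_0(S^1))\xrightarrow{(id_A\otimes \hat q)_*} K^t_k(A\otimes C_0(S^1))\to\cdots,
$$
which already has the shape of a $\times q$-sequence once the outer terms are reindexed.

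Next I would identify $A\otimes C_0(S^1)$ with the suspension of $A$, using $C_0(S^1)\cong C_0(\mathbb{R})$, so that the suspension isomorphism together with Bott periodicity gives $K^t_k(A\otimes C_0(S^1))\cong K^t_{k-1}(A)\cong K^t_{k+1}(A)$. The crucial step is to show that $(id_A\otimes \hat q)_*$ becomes multiplication by $q$ under this identification. This rests on the fact that $\hat q$ is induced by the degree-$q$ self-map $\tilde q\colon S^1\to S^1$, which acts as multiplication by $q$ on the reduced $K$-theory $K^t_*(C_0(S^1))\cong\mathbb{Z}$; by naturality of the external product this property is inherited after tensoring with $A$.

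With these identifications in hand, the cone sequence evaluated at $k=n-2$ reads
$$
K^t_{n}(A)\xrightarrow{\times q}K^t_{n}(A)\to K^t_{n-2}(A\otimes C_q)\to K^t_{n-1}(A)\xrightarrow{\times q}K^t_{n-1}(A),
$$
which is term-for-term the universal-coefficient sequence that the homotopy-fibre definition produces from the fibration $F_q(X)\to X\xrightarrow{q}X$ with $X=B(GL^t(A))$, since the $q$-power map on the loop space $X\simeq \Omega B(GL^t(A))$ induces multiplication by $q$ on homotopy groups. I would then assemble the comparison into a commutative ladder between the two exact sequences, with the identity on the $K^t_*(A)$-columns, and invoke the Five Lemma in the range $n\geq 1$ to obtain the natural isomorphism $K^t_n(A;\mathbb{Z}/q)\cong K^t_{n-2}(A\otimes C_q)$.

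The main obstacle is twofold. The computational heart is the claim that $(id_A\otimes \hat q)_*$ is exactly multiplication by $q$---and not merely an isomorphism up to a unit---which is precisely where the degree of the power map enters. The second, more delicate, point is producing a \emph{genuinely natural} commuting ladder between the fibre sequence and the cone sequence, matching the connecting homomorphisms and not just the outer groups, so that the Five Lemma yields naturality in $A$ rather than an ad hoc isomorphism; this requires tracking the identification $\Omega B(GL^t(-))\simeq GL^t(-)$ compatibly with the suspension and Bott isomorphisms.
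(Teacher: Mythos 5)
Your overall strategy is the same as the paper's: form the cone long exact sequence for $A\otimes C_q$ via Lemma \ref{contenz}(1), form the fibre long exact sequence for $\pi_*(F_q(B(GL^t(A))))$, observe that both have $K^t_*(A)$ with multiplication by $q$ as outer terms, and conclude by the Five Lemma. However, there is a genuine gap at exactly the point you flag as ``delicate'' and then leave unresolved: you never construct the middle vertical arrow of the ladder, i.e.\ a natural homomorphism between $K^t_{n-2}(A\otimes C_q)$ and $K^t_n(A;\mathbb{Z}/q)$ compatible with both exact sequences. This is not a technicality that the Five Lemma can absorb. Two five-term exact sequences with identical outer terms and outer maps need not have isomorphic middle terms at all: both present the middle group as an extension of $\ker(\times q)\subseteq K^t_{n-1}(A)$ by $\mathrm{coker}(\times q)$ on $K^t_n(A)$, and distinct extensions of the same pair of groups can be non-isomorphic (compare $\mathbb{Z}/p^2$ with $\mathbb{Z}/p\oplus\mathbb{Z}/p$). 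So without the comparison map the argument produces no isomorphism, natural or otherwise; the map is the entire content of the proposition, since the two exact sequences and the identification of the outer maps with $\times q$ are routine.

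The paper's idea for filling precisely this hole is the universal property of the homotopy fibre. The space $F_q(\Omega B(GL^t(A)))$ sits in a homotopy pullback square over the $q$-power map of $\Omega B(GL^t(A))$; on the other hand, applying the functor $B(GL^t(-))$ to the pullback square of $C^*$-algebras
$$
\xymatrix{A\otimes C_q\ar[r]\ar[d] & A\otimes C_0(S^1)\otimes C[0;1)\ar[d]\\
A\otimes C_0(S^1)\ar[r]^{id_A\otimes \hat{q}} & A\otimes C_0(S^1)}
$$
produces a commutative square of spaces mapping compatibly into that universal square. Universality then yields a natural map $\chi:B(GL^t(A\otimes C_q))\rightarrow F_q(\Omega B(GL^t(A)))$, and the induced maps $\chi_n:K^t_n(A\otimes C_q)\rightarrow K^t_{n+2}(A;\mathbb{Z}/q)$ are the middle arrows making the ladder commute; only then does the Five Lemma apply. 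If you add this construction (or any equivalent construction of a natural transformation between the two theories commuting with the connecting maps), your argument closes up and coincides with the paper's proof; the remarks you make about $(id_A\otimes\hat q)_*$ being multiplication by $q$ and about the identification $\Omega B(GL^t(A))\simeq GL^t(A)$ are correct and are indeed the ingredients needed to verify commutativity of the ladder.
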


\begin{proof} Since classifying space construction has functorial property,
 according to the functorial
property of the functor $B(GL^t(-))$ and the commutative diagram
$$
\xymatrix{A\otimes C_q\ar[r]\ar[d] &A\otimes
C_0(S^1)\otimes C[0;1)\ar[d]\\
A\otimes C_0(S^1)\ar[r] & A\otimes C_0(S^1),}
$$

one gets the commutative diagrams
$$
\xymatrix{B(GL^t(A\otimes C_q))\ar[r]\ar[d] &B(GL(A\otimes
C_0(S^1)\otimes C[0;1)))\ar[d]\\
B(GL^t(A\otimes C_0(S^1)))\ar[r] & B(GL(A\otimes C_0(S^1)))}
$$
and
$$
\xymatrix{F_q(\Omega B(GL^t(A)))\ar[r]\ar[d] &\Omega B(GL^t(A))^{[0,1)}\ar[d]\\
\Omega B(GL^t(A)\ar[r]^{q} &\Omega B(GL^t(A))}
$$
Since the second diagram is universal, there exists a natural map
$$
\chi :B(GL^t(A\otimes C_q))\rightarrow F_q(\Omega B(GL^t(A))).
$$
Therefore one has a natural homomorphism
$$
\pi _n \chi :\pi _n(B(GL^t(A\otimes C_q)))\rightarrow \pi
_{n}(\Omega F_q( B(GL^t(A))))
$$
Thus there is a natural homomorphism
$$
\chi _n:K_{n}^t(A\otimes C_q)\rightarrow
K_{n+2}^t(A,\mathbb{Z}/q).
$$
Now, consider the following commutative diagram
$$
\xymatrix{\cdots \ar[r]& K_{n}^t(A\otimes C_q)\ar[r]
\ar[d]^{\chi_n}&K_{n}^t(A\otimes C_0(S^1))\ar[r]^{\times
q}\ar[d]^{=}
&K_{n}^t(A\otimes C_0(S^1))\ar[r]\ar[d]^{=} & \cdots \\
\cdots \ar[r]& K_{n+2}^t(A,\mathbb{Z}/q)\ar[r]
&K^t_{n+1}(A)\ar[r]^{\times q} &K^t_{n+1}(A)\ar[r] & \cdots}
$$
According to the Five Lemma, one concludes that $\chi _n$ are
isomorphisms, $n\geq 1$.
\end{proof}

Let $H:\mathcal{C}^*\rightarrow Ab$ be a functor, where
$\mathcal{C^*}$ is the category of unital $C^*$- algebras and their
homomorphisms (non-unital). Then
\begin{enumerate}
    \item if the inclusion in the upper left corner  $A\hookrightarrow M_n(A)$ induces
isomorphism $H(A)\cong H(M_n(A))$, $H$ is said to
be matrix invariant functor.
    \item if $H$ commutes with direct system of $C^*$-algebras, $H$ is said to
be continuous.
\end{enumerate}
For a given matrix invariant and continuous functor $H$ there
exists an extension $\mathcal{H}$ of it on the category of small
additive $C^*$-categories $Add\;C^*$ such that the following
diagram
$$
\xymatrix{C^*\ar[rr]^{proj_f }\ar[dr]_H & &Add\;C^*\ar[dl]^{\mathcal{H}}\\
&Ab &}
$$
commutes, where $proj_f$ is a functor which sends unital
$C^*$-algebra $A$ to the additive $C^*$-category of finitely
generated projective $A$-modules. The functor $\mathcal{H}$ is
defined by the following manner (cf.\cite{hig1}, \cite{kan1}).

First note that the functor $H$ is a inner invariant functor (see
Lemma 2.6.12 in \cite{hig2}).
 Let $\mathbb{A}$ be an additive $C^*$-category. Set
$\mathcal{L}(a)=\hom _{\mathbb{A}}(a,a)$, $a\in ob \mathbb{A}$.
Let us write $a\leq a'$ if there is an isometry $v:a\rightarrow
a'$ in $A$, i.e. $v^*v=id_a$. The relation ''$a\leq a$'' makes the
set of objects into a directed set.

Any isometry $v:a\rightarrow a'$ in $A$ defines a $*$-homomorphism
of $C^{*}$-algebras
$$
{\rm Ad}(v):\mathcal{L}(a)\rightarrow \mathcal{L}(a')
$$
by the rule $x\mapsto vxv^{*}$.

Using technics from \cite{hig1}, one has the following. Let
$v_1:a\rightarrow a'$ and $v_2:a\rightarrow a'$ be two isometries
in $\mathbf{A}$. Then the homomorphisms
$$
{\rm Ad}_{*}v_1,\;\;{\rm Ad}_{*}v_2:H(\mathcal{L}(a))\rightarrow
H(\mathcal{L}(a'))
$$
are equal. Indeed, let $u=\left(
\begin{array}{cc}
  0 & 1 \\
  1 & 0 \\
\end{array}
\right)$ be the unitary element in an unital $C^{*}$-algebra
$M_2(\mathcal{L}(a'))$. Since $H$ is a matrix invariant functor, it
is {\em inner invariant} functor too (see Lemma 2.6.12 in
\cite{hig2}), i.e. the homomorphism $H(ad(u))$ is the identity
map. Therefore, the maps
$$
x\mapsto \left(
\begin{array}{cc}
x & 0 \\
0 & 0
\end{array}
\right) \;\;\;{\rm and}\;\;\;x\mapsto \left(
\begin{array}{cc}
0 & 0 \\
0 & x
\end{array}
\right)
$$
sending $\mathcal{L}(a')$ into $M_2(\mathcal{L}_A(I)(a'))$, induces
the same isomorphisms after applying the functor $H$. It is clear
that the homomorphism $\nu _{*}^{aa'}=H(\nu ^{aa'})$ is not
depending on the choice of an isometry $\nu ^{aa'}:a\rightarrow
a'$. Therefore one has a direct system
$\{H(\mathcal{L}(a)),\nu_{*}^{aa'})\}_{a,a'\in obA}$ of abelian
groups.
\begin{defn}
\label{waa}Let $\mathbb{A}$ be an additive small $C^*$-category.
Then by definition
$$
\mathcal{H}(\mathbb{A})=\underrightarrow{\lim}\;H(\mathcal{L}(a)).
$$
\end{defn}
So defined functor makes commutative the above diagram. That follows
from the matrix invariant and continuous properties of
$H$ and is a simple exercise  (see \cite{kan1}).

Since the functors $K^t(-;\mathbb{Z}/q)$ have the above mentioned
properties, one can define the $q$-finite topological $K$-theory for
an additive $C^*$-category $\mathbb{A}$ by setting
$$
K^t_n(\mathbb{A};\mathbb{Z}/q)=\underrightarrow{\lim}\;K^t_n(\mathcal{L}(a);\mathbb{Z}/q).
$$
This definition is in accordance with other definitions of
$q$-finite topological $K$-theories because of the matrix
invariant and continuous properties. Therefore we get a
generalization of Browder-karoubi-Lambre 's theorem for small additive
$C^*$-categories.

\begin{prop}
\label{badt} Let $\mathbb{A}$ be a small additive $C^*$-category.
Then, for all $n\in \mathbb{Z}$,
\begin{enumerate}
    \item $\;\;\;\;$ $q\cdot K_n^t (\mathcal{A}, Z/q) = 0,$ $\;\;\;\;\;\;$ if $q-2$ is not divided by $4$;
    \item $\;\;\;\;$ $2q \cdot K_n ^t(\mathcal{A}, Z/q) = 0,$ $\;\;\;\;\;$ if $4$ divides $q-2$.
    \end{enumerate}
\end{prop}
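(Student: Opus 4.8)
The plan is to reduce the proposition, through a short chain of steps, to the classical Browder--Karoubi--Lambre theorem for finite algebraic $K$-theory, exactly as announced in the introduction.

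First I would descend from the category $\mathbb{A}$ to its endomorphism algebras. By the definition of finite topological $K$-theory for additive $C^*$-categories given above, i.e.\ the instance of Definition \ref{waa} for the functor $K^t_n(-;\mathbb{Z}/q)$,
$$
K^t_n(\mathbb{A};\mathbb{Z}/q)=\underrightarrow{\lim}\;K^t_n(\mathcal{L}(a);\mathbb{Z}/q),
$$
the colimit being taken over the directed set of objects $a$ with transition maps $\nu_*^{aa'}$. Since multiplication by any fixed integer is additive, it commutes with every transition homomorphism, and a directed colimit of abelian groups each annihilated by $q$ (resp.\ by $2q$) is again annihilated by $q$ (resp.\ $2q$). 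Hence it is enough to establish the two bounds with $\mathbb{A}$ replaced by an arbitrary $C^*$-algebra $R=\mathcal{L}(a)$.

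Next I would trade finite topological $K$-theory of $R$ for finite algebraic $K$-theory of $R\otimes\mathcal{K}$. The comparison isomorphism $\varepsilon^{-1}\alpha_{n,q}\colon K^a_n(R\otimes\mathcal{K};\mathbb{Z}/q)\xrightarrow{\cong}K^t_n(R;\mathbb{Z}/q)$ proved above is an isomorphism of abelian groups, so it preserves annihilation by any integer; thus $q\cdot K^t_n(R;\mathbb{Z}/q)=0$ holds if and only if $q\cdot K^a_n(R\otimes\mathcal{K};\mathbb{Z}/q)=0$, and likewise with $2q$. Applying the classical theorem to the (non-unital) ring $R\otimes\mathcal{K}$ --- annihilation by $q$ when $q-2\not\equiv 0\pmod 4$ and by $2q$ when $4\mid q-2$, due to Browder \cite{brw} for $n>1$ and to Karoubi--Lambre for $n=1$, and valid for non-unital rings as noted in the introduction --- then yields precisely the two asserted statements.

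The one genuine gap in this chain, and the step I expect to require the most care, is the range of the index: both the comparison isomorphism and the algebraic theorem are formulated only for $n\ge 1$, whereas the proposition claims the result for every $n\in\mathbb{Z}$. To close this I would use the Bott periodicity of finite topological $K$-theory, which gives a natural isomorphism $K^t_n(R;\mathbb{Z}/q)\cong K^t_{n+2}(R;\mathbb{Z}/q)$, obtained from the two-periodicity of the integral groups $K^t_\ast(R)$ together with the defining mod-$q$ long exact sequence (the topological analogue of Proposition \ref{propf}(2)) and the Five Lemma, and consistent with the isomorphism $K^t_n(A;\mathbb{Z}/q)\cong K^t_{n-2}(A\otimes C_q)$ established above. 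Iterating this periodicity isomorphism shifts an arbitrary integer $n$ up into the range $n\ge 1$ where the first two steps apply; since it is a group isomorphism it again preserves annihilation by $q$ and by $2q$, so the bounds transport to all degrees and the argument closes.
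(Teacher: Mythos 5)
Your proposal is correct and is essentially the paper's own argument: the paper's entire proof is the single line ``It is consequence of Proposition 4.1,'' which tacitly bundles exactly your three steps --- pass to the directed colimit over the endomorphism algebras $\mathcal{L}(a)$ (where exponent bounds are preserved), apply the comparison isomorphism $\varepsilon^{-1}\alpha_{n,q}$ of Proposition 4.1, and invoke the classical Browder--Karoubi--Lambre theorem for the non-unital rings $\mathcal{L}(a)\otimes\mathcal{K}$. Your explicit use of Bott periodicity to pass from the range $n\ge 1$, where Proposition 4.1 is stated, to all $n\in\mathbb{Z}$ fills in a point the paper leaves completely implicit, but it does not change the route.
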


\begin{proof}
It is consequence of Proposition 4.1.
\end{proof}

The next step is to give an interpretation of $q$-finite
$KK^G$-theory as topological $K$-theory of the
additive $C^*$-category $Rep_G(A,B)$. Such an interpretation
exists for $KK^G$-theory, where $G$ is a compact metrizable group
\cite{kan1}.

\begin{thm}
\label{gbtt} Let $A$ and $B$ be, respectively, separable and
$\sigma $-unital $G-C^*$-algebras, real or complex; and $G$ be
metrizable compact group. Then, for all integer $n$ and $q\geq 2$,
there exists a natural isomorphisms
$$
KK_n^G (A,B;\zq)\cong K^t_{n+1}(\mathrm{Rep}(A,B);\mathbb{Z}/q),
$$
\end{thm}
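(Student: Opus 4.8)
The plan is to deduce the finite statement from the corresponding \emph{integral} statement proved in \cite{kan1}, by pushing the $q$-finite machinery of Section 1 through the equivalence between $KK^G$-theory and the topological $K$-theory of the Fredholm module category. Write $H_m(B)=KK^G_m(A,B)$ and $\tilde H_m(B)=K^t_{m+1}(\mathrm{Rep}(A,B))$, both regarded as homology theories in the variable $B$ for fixed separable $A$ and compact metrizable $G$. The main theorem of \cite{kan1} provides a natural isomorphism of homology theories $\theta\colon H\xrightarrow{\cong}\tilde H$; evaluating it at the algebra $B\otimes C_q$ is the one substantive homological input, and the abstract form of this comparison is recorded in Corollary \ref{tfbc}.

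By Definition \ref{modq} the $q$-finite theory of $H$ is $H^{(q)}_n(B)=H_{n-2}(B\otimes C_q)$, which by the defining equation \eqref{rmodp} of finite $KK^G$-theory is exactly the left-hand side $KK^G_n(A,B;\zq)$. Applying $\theta$ with $B$ replaced by $B\otimes C_q$ and $m=n-2$ therefore gives
$$
KK^G_n(A,B;\zq)=KK^G_{n-2}(A,B\otimes C_q)\;\cong\;K^t_{n-1}\bigl(\mathrm{Rep}(A,B\otimes C_q)\bigr).
$$
It remains to identify the right-hand group with $K^t_{n+1}(\mathrm{Rep}(A,B);\mathbb{Z}/q)$. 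The subtlety is that these two groups encode two a priori different ``mod\,$q$'' constructions: the first tensors the coefficient algebra $B$ by $C_q$, whereas the target of the theorem is built from the mod\,$q$ homotopy groups (the fibre $F_q$ of the $q$-power map) of the endomorphism algebras of $\mathrm{Rep}(A,B)$.

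To reconcile the two I would proceed in two steps. First, extend the preceding Proposition from algebras to additive $C^*$-categories: since $K^t_n(\mathbb{A};\mathbb{Z}/q)=\varinjlim K^t_n(\mathcal{L}(a);\mathbb{Z}/q)$ and direct limits are exact, the termwise isomorphism $K^t_{n+1}(\mathcal{L}(a);\mathbb{Z}/q)\cong K^t_{n-1}(\mathcal{L}(a)\otimes C_q)$ passes to the limit and yields $K^t_{n+1}(\mathbb{A};\mathbb{Z}/q)\cong K^t_{n-1}(\mathbb{A}\otimes C_q)$, where $\mathbb{A}\otimes C_q$ has the same objects as $\mathbb{A}$ and endomorphism algebras $\mathcal{L}(a)\otimes C_q$. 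Second, establish the geometric identification $\mathrm{Rep}(A,B)\otimes C_q\simeq\mathrm{Rep}(A,B\otimes C_q)$, i.e. that tensoring the morphism algebras of the Fredholm module category by $C_q$ realises, up to the equivalence inducing $K^t$, the Fredholm module category with coefficients in $B\otimes C_q$. Chaining these with the displayed isomorphism closes the argument:
$$
K^t_{n+1}(\mathrm{Rep}(A,B);\mathbb{Z}/q)\cong K^t_{n-1}(\mathrm{Rep}(A,B)\otimes C_q)\cong K^t_{n-1}(\mathrm{Rep}(A,B\otimes C_q))\cong KK^G_n(A,B;\zq).
$$

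The main obstacle is the second step, the identification $\mathrm{Rep}(A,B)\otimes C_q\simeq\mathrm{Rep}(A,B\otimes C_q)$, since it is the only place that uses the fine structure of the Fredholm module category from \cite{kan1} rather than formal properties of homology theories; concretely it reduces to comparing $\mathcal{L}_B(E)\otimes C_q$ with $\mathcal{L}_{B\otimes C_q}(E\otimes C_q)$ over the directed system of objects and checking that the comparison survives the passage to the direct limit defining $K^t$. I expect this to be a stability/Morita argument in the spirit of the proof of the preceding Proposition. The whole comparison can alternatively be packaged as a map of two mod\,$q$ Bockstein ladders, namely the sequence \eqref{rexc} on the $KK^G$ side and the homotopy fibre sequence of the $q$-power map on the topological side, whose four integral terms are matched by $\theta$, so that the Five Lemma forces the isomorphism on the middle finite terms; this repackaging does not remove the need for the compatibility of $\theta$ with the $C_q$-construction, which is the real content.
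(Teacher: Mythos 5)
Your reduction is the same as the paper's: both use the integral isomorphism $\theta$ of \cite{kan1} to rewrite $KK^G_n(A,B;\zq)=KK^G_{n-2}(A,B\otimes C_q)\cong K^t_{n-1}(\mathrm{Rep}(A,B\otimes C_q))$, and both observe, exactly as in your first step, that $K^t_{n+1}(\mathrm{Rep}(A,B);\mathbb{Z}/q)=\varinjlim_{b}K^t_{n+1}(\mathcal{L}(b);\mathbb{Z}/q)\cong\varinjlim_{b}K^t_{n-1}(\mathcal{L}(b)\otimes C_q)$ by applying the algebra-level proposition termwise over the directed set of objects, so that everything comes down to comparing these two direct limits.

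The gap is your second step. You require an identification $\mathrm{Rep}(A,B)\otimes C_q\simeq\mathrm{Rep}(A,B\otimes C_q)$, you correctly flag it as the main obstacle, and you leave it unproven, hoping for a ``stability/Morita argument''. That identification is genuinely doubtful: an object of $\mathrm{Rep}(A,B\otimes C_q)$ is an arbitrary countably generated $G$-Hilbert $B\otimes C_q$-module carrying an $A$-representation and a projection, and nothing forces it to be, up to any relevant notion of equivalence, of the form $(E\otimes C_q,\phi\otimes\id_{C_q},p\otimes\id_{C_q})$; you give no argument, and the paper never proves such a statement. The point you miss --- and the reason the paper's proof closes --- is that no equivalence is needed. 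The paper constructs only the one-directional functor $\nu$, given by $(E,\phi,p)\mapsto(E\otimes C_q,\phi\otimes\id_{C_q},p\otimes\id_{C_q})$ on objects and $f\mapsto f\otimes\id_{C_q}$ on morphisms; this induces a map of direct systems $K^t_{n-1}(\mathcal{L}(b)\otimes C_q)\to K^t_{n-1}(\mathcal{L}(\nu(b)))$ and hence a natural homomorphism $\bar{\nu}_n:K^t_{n+1}(\mathrm{Rep}(A,B);\mathbb{Z}/q)\to K^t_{n-1}(\mathrm{Rep}(A,B\otimes C_q))$, and the Five Lemma applied to the two Bockstein ladders (the mod $q$ sequence for $K^t$ of $\mathrm{Rep}(A,B)$ on top, the image under $\theta$ of the sequence \eqref{rexc} below), with identity maps on the integral terms, forces $\bar{\nu}_n$ to be an isomorphism. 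This is precisely your ``alternative packaging'', but your closing claim that it ``does not remove the need for the compatibility of $\theta$ with the $C_q$-construction'' is a misjudgment: the only compatibility the ladder argument needs is commutativity of the diagram, i.e.\ naturality of $\nu$ and of $\theta$, which is far weaker than the categorical identification your main route demands and which the explicit formula for $\nu$ delivers immediately. Your proposal becomes the paper's proof once you drop the unproved equivalence and promote your fallback to the main argument.
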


When $G$ is locally compact group, the proof is more complicated and
 this case will be investigated in the further paper.

First we recall the definition of the $C^*$-category $Rep(A,B)$. This
category was constructed in \cite{kan1}.

Let $\mathcal{H}_G(B)$ be the additive $C^*$-category of countably
generated right Hilbert $B$-modules equipped with a $B$-linear,
norm-continuous $G$-action over a fixed compact second countable
group $G$ \cite{kas1}. Note that the compact group acts on the
morphisms by the following rule: for $f:E\rightarrow E'$ the
morphism $gf:E\rightarrow E'$ is defined by the formula
$(gf)(x)=g(f(g^{-1}(x)))$.

The category $\mathcal{H}_G(B)$ contains the class of compact
$B$-homomorphisms \cite{kas1}. Denote it by $\mathcal{K}_{G}(B)$.
Known properties of compact $B$-homomorphisms imply that
$\mathcal{K}_{G}(B)$ is a $C^*$-ideal \cite{clr} in
$\mathcal{H}_G(B)$.

Objects of the category $Rep(A,B)$ are pairs of the form
$(E,\varphi)$, where $E$ is an object in $\mathcal{H}_G(B)$ and
$\varphi:A\rightarrow\mathcal{L}(E)$ is an equivariant
$*$-homomorphism. A morphism $f:(E,\phi)\rightarrow(E',\phi')$ is
a $G$-invariant morphism $f:E\rightarrow E'$ in $\mathcal{H}_G(B)$
such that
$$
f\phi(a)-\phi'(a)f\in\mathcal{K}_G(E,E')
$$
for all $a\in A$. The structure of a $C^*$-category is inherited
from $\mathcal{H}_G(B)$. It is easy to see that $Rep(A,B)$ is an
additive $C^{*}$-category, not idempotent-complete.

Now, we are ready to construct our main $C^*$-category, that is
$\mathrm{Rep}(A,B)$. Its objects are triples $(E,\phi,p)$, where
$(E,\phi)$ is an object and $p:(E,\phi)\rightarrow (E,\phi)$ is a
morphism in $Rep(A,B)$ such that $p^*=p$ and $p^2=p$. A morphism
$f:(E,\phi,p)\rightarrow(E',\phi',p')$ is a morphism
$f:(E,\phi)\rightarrow(E',\phi')$ in $Rep(A,B)$ such that
$fp=p'f=f$. In detail, $f$ must satisfy
\begin{equation}
f\phi(a)-\phi'(a)f\in \mathcal{K}(E,F) \textrm{ and } fp=p'f=f.
\end{equation}
So, by definition
$$
\mathrm{Rep}(A,B)=\widetilde{Rep(A,B)}.
$$
The structure of a $C^*$-category on $\mathrm{Rep}(A,B)$ comes
from the corresponding structure on $Rep(A,B)$.

\begin{proof}(\textit{of the theorem} \ref{gbtt}) The following isomorphisms
\begin{equation*}
\theta _n^a: K^a_n(\mathrm{Rep}(A;B))\simeq KK^G_{n-1}(A;B),
\end{equation*}
and
\begin{equation*}
\theta _n^t:K^t_n(\mathrm{Rep}(A;B))\simeq KK^G_{n-1}(A;B),
\end{equation*}
was proved in \cite{kan1}. According to the definition of the
finite $KK^G$-groups and these isomorphisms, in particular, we
have the following result for finite $KK^G$-theory:

Let $A$ and $B$ be, respectively, separable and  $\sigma $-unital
$G-C^*$-algebras. Then
\begin{equation}\label{rept}
KK^G_{n}(A,B; \mathbb{Z}_q)\cong K^t_{n-1}(\mathrm{Rep}(A;B\otimes
C_q))\cong K^a_{n-1}(\mathrm{Rep}(A;B\otimes C_q)).
\end{equation}

Therefore it is enough to show that
$$
K^t_{n+1}(\mathrm{Rep}(A,B);\mathbb{Z}/q)\cong
K^t_{n-1}(\mathrm{Rep}(A;B\otimes C_q)).
$$
Note that
$$
K^t_{n-1}(\mathrm{Rep}(A,B\otimes C_q)\cong
\underrightarrow{\lim}_{a\in \mathrm{Rep}(A,B\otimes
C_q)}\;\;K^t_{n-1}(\mathcal{L}(a))
$$
and
$$
K^t_{n+1}(\mathrm{Rep}(A,B;Z_q))=\underrightarrow{\lim}_{b\in ob
\mathrm{Rep}(A,B)}\;\;K^t_{n-1}(\mathcal{L}(b)\otimes C_q).
$$
So it is enough to compare the right-hand sides.

Consider $\mathrm{Rep}(A,B)\otimes C_q$ as the $C^*$-tensor
product of $C^*$-categoroids in the sense of \cite{kan1} (or as
non-unital $C^*$-categories in the sense of \cite{mitch}).

There is a natural (non-unital) functor
$$
\nu :Rep(A,B)\otimes C_q\rightarrow \mathrm{Rep}(A,B\otimes C_q)
$$
defined by maps:
\begin{enumerate}
    \item $b=(\varphi , E, p)\mapsto \varphi \otimes id_{C_q}, E\otimes
C_q, p\otimes \id_{C_q})=a_b$ on objects;
    \item $f\mapsto f\otimes id_{C_q}$ on morphisms.
\end{enumerate}
 One has induced morphism of direct systems of abelian groups
$$
\{\nu _a\} :\{K^t_n(\mathcal{L}(a)\otimes C_q)\}\rightarrow
\{K^t_n(\mathcal{L}(b)\},
$$
where $\nu _a:K^t_n(\mathcal{L}(a)\otimes C_q)\rightarrow
K^t_n(\mathcal{L}(a_b)$ is induced by $\nu$. Therefore one has a
natural homomorphism
$$
\bar{\nu} _n:K^t_{n+1}(\mathrm{Rep}(A,B);\mathbb{Z}/q)\rightarrow
K^t_{n-1}(\mathrm{Rep}(A;B\otimes C_q)).
$$
Then comparing the  two twosided exact sequences
$$
\xymatrix{\cdots \ar[r]& K^t_{n+1}(\mathrm{Rep}(A;B) ,
\mathbb{Z}_q)\ar[r]\ar[d]^{\bar{\nu} _n}
&K^t_n(\mathrm{Rep}(A;B))\ar[r]^{\times q}\ar[d]^=
&K^t_n(\mathrm{Rep}(A;B))\ar[r]\ar[d]^= & \cdots \\\cdots \ar[r]&
K^t_{n-1}(\mathrm{Rep}(A;B\otimes C_q))\ar[r]
&K^t_n(\mathrm{Rep}(A;B))\ar[r]^{\times q}
&K^t_n(\mathrm{Rep}(A;B))\ar[r] & \cdots}
$$
one concludes that $ \bar{\nu} $ is an isomorphism.
\end{proof}

Now,we show the Browder-Karoubi-Lambre's theorem for finite $KK^G$-theory .
\begin{thm}
\label{gbtt} Let $A$ and $B$ be, respectively, separable and
$\sigma $-unital $G-C^*$-algebras, real or complex; and $G$ be
metrizable compact group. Then, for all $n\in \mathbb{Z}$,
\begin{enumerate}
    \item $\;\;\;\;$ $q\cdot KK_n^G (A,B;\zq) = 0,$ $\;\;\;\;\;\;$ if $q-2$ is not divided by $4$;
    \item $\;\;\;\;$ $2q \cdot KK_n^G (A,B;\zq) = 0,$ $\;\;\;\;\;$ if $4$ divides $q-2$.
    \end{enumerate}
\end{thm}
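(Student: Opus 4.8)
The plan is to reduce the statement entirely to Proposition \ref{badt} via the $C^*$-categorical interpretation of finite $KK^G$-theory. First I would invoke the natural isomorphism
$$
KK_n^G(A,B;\zq)\;\cong\;K^t_{n+1}(\mathrm{Rep}(A,B);\mathbb{Z}/q)
$$
established in the preceding theorem (the interpretation of $q$-finite $KK^G$-theory as finite topological $K$-theory of the additive $C^*$-category of Fredholm modules), which holds under exactly the hypotheses imposed here: $A$ separable, $B$ $\sigma$-unital, and $G$ metrizable compact. This transports any question about the abelian group on the left into a question about $K^t_{n+1}(\mathrm{Rep}(A,B);\mathbb{Z}/q)$.

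Next I would note that $\mathrm{Rep}(A,B)=\widetilde{Rep(A,B)}$ is a small additive $C^*$-category, so that Proposition \ref{badt} is directly applicable with $\mathbb{A}=\mathrm{Rep}(A,B)$ and index $n+1$. This yields
$$
q\cdot K^t_{n+1}(\mathrm{Rep}(A,B);\mathbb{Z}/q)=0\quad\text{when }4\nmid(q-2),
$$
and $2q\cdot K^t_{n+1}(\mathrm{Rep}(A,B);\mathbb{Z}/q)=0$ when $4\mid(q-2)$, splitting into precisely the two cases of the statement. Since multiplication by an integer is a group endomorphism and the isomorphism above is an isomorphism of abelian groups, it intertwines multiplication by $q$ (hence by $2q$) on the two sides; the annihilation therefore transfers verbatim to $KK_n^G(A,B;\zq)$. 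This works for every $n\in\mathbb{Z}$, because Proposition \ref{badt} is stated for all integer indices and the index shift $n\mapsto n+1$ imposes no restriction.

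There is essentially no remaining obstacle at this point: the genuine content has already been discharged by the two ingredients. The deep step is the $C^*$-categorical interpretation theorem, whose proof rests on the isomorphisms $\theta_n^a$, $\theta_n^t$ of \cite{kan1} and genuinely needs $G$ compact metrizable, together with the functor $\nu$ comparing $Rep(A,B)\otimes C_q$ with $\mathrm{Rep}(A,B\otimes C_q)$; the arithmetic input is Proposition \ref{badt}, itself a consequence of Proposition 4.1 reducing finite topological $K$-theory to finite algebraic $K$-theory, where the Browder--Karoubi--Lambre bounds ultimately originate. If any care is warranted, it is only in confirming the smallness condition needed to form $K^t_\ast(\mathrm{Rep}(A,B);\mathbb{Z}/q)$ as a direct limit over objects, which is in any case already implicit in the formation of the right-hand side of the isomorphism; once that is granted, the proof is the one-line combination \emph{isomorphism} $+$ Proposition \ref{badt} indicated above.
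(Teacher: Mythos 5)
Your proposal is correct and follows essentially the same route as the paper: the paper's own proof is precisely the combination of the interpretation theorem $KK_n^G(A,B;\zq)\cong K^t_{n+1}(\mathrm{Rep}(A,B);\mathbb{Z}/q)$ (Theorem 5.4, together with Propositions 4.1 and 5.1 feeding into it) with Proposition \ref{badt} applied to the small additive $C^*$-category $\mathrm{Rep}(A,B)$, the arithmetic bounds ultimately coming from the Browder--Karoubi--Lambre theorem in algebraic $K$-theory. Your transfer-of-annihilation argument and your remark on smallness are exactly the (implicit) content of the paper's one-line proof.
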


\begin{proof}
Follows from Propositions 4.1, 5.1 and 5.3, from Theorem 5.4 and from the Browder-Karoubi-Lambre's theorem for algebraic K-theory.
\end{proof}



\begin{thebibliography}{99}
\bibitem{arin} Arlettaz D. Inassaridze Finite $K$-theory spaces
Math. Proc. Camb. Phil.Soc. (2005), 139. 261-286.

\bibitem{brw} Browder W. Algebraic $K$-theory with coefficients
$\mathbb{Z}/p$, in Geometric Applications of Homotopy Theory I,
Lect. Notes in Math. 657 (Springer, 1978), 40-84.

\bibitem{bla}  Blackadar B. $K$-\textit{theory for Operator Algebras}, M.S.R.I.
Publ. 5, Springer-Verlag, (1986).

\bibitem{clr}
P. Chez, R. Lima and J. Roberts, \textit{$W^{*}$-categories},
Pacific J. Math. $\mathbf{120}$ (1985), No 1, 79-109.

\bibitem{cumero} Cuntz J., Meyer R., Rosenberg J., \textit{Topological and Bivariant K-Theory}, Birkhäuser, Basel (2007).

\bibitem{hig2}  Higson N. \textit{Algebraic $K$-theory of stable $C^*$-algebras},
Adv. Math., v.67, (1988) 1-140.

\bibitem{hig1}  Higson N.\ $C^{*}$-\textit{algebra extension theory and duality}
, J.Funct. Anal., v.129, (1995), 349-363.

\bibitem{kan1}  Kandelaki T., \textit{Algebraic K-theory of Fredholm modules and
KK-theory} Vol. 1, (2006),  No. 1, 195-218.

\bibitem{kas1}  Kasparov G. \textit{Hilbert $C^{*}$-modules: Theorems of
stinespring and voiculescu}, J. Operator theory, v. 4, (1980),
133-150.

\bibitem{mene}Meyer R., Nest R. The Baum-Connes Conjecture via Localisation of Categories
arXiv:math/0312292 .

\bibitem{mitch} Mitchener P. C*-\textit{categories} Proceedings of the London
Mathematical Society, volume 84 (2002), 375-404.

\bibitem{suw}  Suslin A., Wodzicki M. \textit{Excision in algebraic
$K$-theory}, Ann. Math. v.136, No.1, (1992), 51-122.

\bibitem{weid} Weidner J. \textit{Topological invariant for Generalized
Operatorial Algebras} Dissertation, Heidelberg (1987).
\end{thebibliography}
\end{document}